\newtheorem{theorem}{Theorem}
\newtheorem{lemma}[theorem]{Lemma}
\def\ind{{\mathbf{1}}}
\def\EE{{\mathbb E}}
\def\PP{{\mathbb P}}
\def\dist{\text{dist}}
\def\HH{{\mathcal H}}
\def\cG{\mathcal{G}}
\def\RR{{\mathbb R}}
\def\cF{{\mathcal F}}
\def\ind{{\mathbf{1}}}
\def\EE{{\mathbb E}}
\def\PP{{\mathbb P}}
\def\dist{\mathrm{dist}}
\def\cG{\mathcal{G}}
\def\tx{{\rm tx}}
\newcommand{\eps}{\varepsilon}
\newcommand{\Bin}{\mathsf{Bin}}
\newcommand{\Po}{\mathsf{Po}}
\title{Shortest-weight paths in random regular graphs}
\begin{document}
\author{Hamed Amini\footnote{EPFL, Lausanne, Switzerland; hamed.amini@epfl.ch} \and  Yuval Peres\footnote{Microsoft Research, Redmond, Washington, USA; peres@microsoft.com} }
\date{}
\maketitle

\begin{abstract}
Consider a random regular graph with degree $d$ and of size $n$. Assign to each edge an i.i.d.\ exponential random variable with mean one.
In this paper we establish a precise asymptotic expression for the maximum number of edges on the shortest-weight paths between a fixed vertex and all the other vertices, as well as between any pair of vertices.
Namely, for any fixed $d \geq 3$, we show that the longest of these shortest-weight paths has about $\widehat{\alpha}  \log n$ edges where $\widehat{\alpha} $ is the unique solution of the equation $\alpha \log\left(\frac{d-2}{d-1}\alpha\right) - \alpha = \frac{d-3}{d-2}$, for $\alpha > \frac{d-1}{d-2}$.

\end{abstract}

\section{Introduction}

The focus of this paper is on first passage percolation on a random regular graph,
namely on $G\sim\cG(n,d)$, a graph uniformly distributed  over the set of all graphs on $n$ vertices $[n]:= \{1,\dots,n\}$, in which every vertex has degree $d$, for $d \geq 3$ and $n$ large. We assume that each edge in this graph has an i.i.d.\ exponential weight with mean one. We consider the shortest-weight paths between any pair of vertices of this graph, and establish that the longest of
these shortest-weight paths has about $\widehat{\alpha}  \log n$ edges for some positive constant $\widehat{\alpha}$ depending on $d$ that we will shortly define.
We also derive a similar precise asymptotic expression for the maximum number of edges on the shortest-weight paths between a fixed vertex and all the other vertices, see Theorem~\ref{thm-main} for the exact statement.

Let $G=(V,E,w)$ be a weighted graph, defined as the data of a graph $G = (V,E)$ and a collection of weights $w = \{w_e\}_{e\in E}$ associated to each edge $e \in E$. For two vertices $a,b \in V$, the weighted distance between $a$ and $b$ is given by $$\dist_w(a,b) = \min_{\pi\in\Pi(a,b)}\sum_{e\in \pi} w_e\:,$$
where the minimum is taken over the set $\Pi(a,b)$ of all paths between $a$ and $b$ in the graph. For $a,b \in V$ we denote by $\pi(a,b)$ the shortest-weight path between $a$ and $b$.

We define the function $f: \RR^+ \rightarrow \RR$ as follows
\begin{eqnarray}
f(\alpha):= \alpha \log\left(\frac{d-2}{d-1}\alpha\right) - \alpha  + \frac{1}{d-2}.
\end{eqnarray}
Note that
$f'(\alpha) =  \log\left(\frac{d-2}{d-1}\alpha\right)$
is positive for $\alpha > \frac{d-1}{d-2}$, and $f\left(\frac{d-1}{d-2}\right) = -1$.

\noindent We let $\alpha^*$ and $\widehat{\alpha} $ be respectively the unique solutions to $f(\alpha)=0$ and $f(\alpha)=1$ for $\alpha > \frac{d-1}{d-2}$.

\medskip

\noindent The main result of this paper is the following theorem.

\begin{theorem}\label{thm-main}
Fix $d \geq 3$ and let $G\sim\cG(n,d)$ be a weighted random $d$-regular graph with $n$ vertices and i.i.d.\ rate one exponential variables on its edges. Then, as $n \rightarrow \infty$, we have
\begin{eqnarray}
\frac{\max_{j \in [n]} |\pi(1,j)|}{\log n} \stackrel{p}{\longrightarrow} \alpha^*,
\end{eqnarray}
and
\begin{eqnarray}
\frac{\max_{i,j \in [n]} |\pi(i,j)|}{\log n} \stackrel{p}{\longrightarrow} \widehat{\alpha} ,
\end{eqnarray}
where $\stackrel{p}{\longrightarrow}$ denotes the convergence in probability.
\end{theorem}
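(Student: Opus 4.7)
The plan is to reduce both assertions to a single large-deviation calculation in an auxiliary continuous-time branching process (CTBP), and then verify that the first-moment bound is sharp via a second-moment / meet-in-the-middle argument.

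\textbf{Step 1 (Exploration and CTBP).} I would first view the first-passage exploration from a vertex $v$ as a Markov process revealing edges in increasing order of weighted distance from $v$. Because $G \sim \cG(n,d)$ is locally tree-like and the exponential weights are memoryless, this exploration couples, as long as at most $\sqrt{n}$ vertices have been revealed, to a CTBP in which the root has $d$ children and every other particle has $d-1$ children, each born after an independent $\mathsf{Exp}(1)$ delay. The Malthusian parameter is $d-2$, so a ball of weighted radius $t$ has size $\sim e^{(d-2)t}$, the whole graph is filled at weighted distance $\approx \frac{\log n}{d-2}$, and the minimizer $\alpha = \frac{d-1}{d-2}$ of $f$ corresponds to the typical hopcount-per-unit-$\log n$ of a random vertex.

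\textbf{Step 2 (First moment, upper bounds).} In the CTBP, the expected number of particles at generation $k$ with weight at most $w$ equals $(d-1)^k \, \PP[\mathrm{Gamma}(k,1) \leq w]$. Taking $k = \alpha \log n$ and $w = \frac{\log n}{d-2}$ and applying the Cram\'er estimate for Gamma tails yields
\[
(d-1)^k \, \PP\!\left[\mathrm{Gamma}(k,1) \leq \tfrac{\log n}{d-2}\right] = n^{-f(\alpha) + o(1)}.
\]
Via the coupling this gives $\EE[\#\{j : |\pi(1,j)| \geq \alpha \log n\}] \leq n^{-f(\alpha) + o(1)}$, and Markov's inequality delivers $\max_j |\pi(1,j)| \leq (\alpha^* + \eps)\log n$ w.h.p.\ for every $\eps > 0$. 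For the pair maximum, a union bound over the $n$ possible sources yields
\[
\EE[\#\{i : \max_j |\pi(i,j)| \geq \alpha \log n\}] \leq n^{1-f(\alpha) + o(1)},
\]
which vanishes iff $\alpha > \widehat{\alpha}$, proving the pair upper bound.

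\textbf{Step 3 (Lower bounds).} For $\alpha < \alpha^*$ (resp.\ $\alpha < \widehat{\alpha}$) the first-moment count is polynomially large in $n$, and the task is to exhibit at least one realized long path (resp.\ long-pair source). I would carry out a second-moment / Chebyshev estimate on the number of paths of length $\alpha \log n$ with weight at most $w$, showing that two such path-indicators are essentially independent whenever the paths share only a short prefix, and that a long overlap is itself exponentially unlikely given the hopcount constraint. A cleaner variant is to explore from $v$ until roughly $\sqrt{n}$ vertices have been revealed, apply concentration in the CTBP to exhibit a particle at generation $\approx \tfrac{\alpha^*}{2}\log n$ with weight $\approx \tfrac{\log n}{2(d-2)}$, then launch a symmetric exploration from an appropriately chosen endpoint and join the two halves by a birthday-type collision. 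For the pair maximum the same computation from each source gives $\sim n^{\delta}$ candidate sources on average for some $\delta>0$; a quasi-independence argument based on the negligible overlap of two $\sqrt{n}$-sized neighborhoods upgrades this to existence w.h.p.

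\textbf{Main obstacle.} The delicate point is controlling correlations in the second moment: two shortest-weight paths with atypically large hopcount may share edges, and conditioning on one path biases the weights on the shared segment. Handling this requires exploiting the fact that a long overlap would itself violate the hopcount constraint, so typical pairs of long paths are nearly edge-disjoint and nearly independent. A parallel subtlety is that the CTBP coupling is only faithful while the exploration is tree-like ($|B_t| \ll \sqrt{n}$), so one must verify that the relevant rare hopcount events are already determined in this regime and then glued across the two exploration halves.
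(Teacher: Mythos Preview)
Your outline is in the right spirit but has concrete gaps in both directions.

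\textbf{Upper bound.} In Step~2 you bound $\EE[\#\{j:|\pi(1,j)|\ge\alpha\log n\}]$ by the CTBP count $(d-1)^k\,\PP[\mathrm{Gamma}(k,1)\le w]$ with $w=\tfrac{\log n}{d-2}$. Read as a first moment on paths in $G$ of length $k$ and weight $\le w$, this only controls vertices $j$ with $\dist_w(1,j)\le w$; but the flooding time from a fixed vertex is $(\tfrac{1}{d-2}+\tfrac{1}{d}+o(1))\log n$ (Theorem~\ref{thm-DKLP}), so a positive fraction of vertices lie outside your window and are simply not counted. Taking $w$ equal to the flooding time instead shifts the Cram\'er rate and yields a threshold strictly larger than $\alpha^*$. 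Read instead via the CTBP coupling, the coupling you invoke is only valid while the explored set has size $o(\sqrt{n})$, i.e.\ up to time $\tfrac{\log n}{2(d-2)}$, not $\tfrac{\log n}{d-2}$. The paper circumvents both issues by running the two-sided exploration for the \emph{upper} bound: grow balls from $u$ and from $v$ to size $q\asymp\sqrt{n\log n}$, show they must intersect, and bound each half's hopcount via the representation $H_q\stackrel{d}{=}\sum_{i\le q}I_i$ with $\PP(I_i=1)=\tfrac{d-1}{S_i}$ together with a Chernoff bound on the MGF; the rate $f$ appears precisely because $2\log q\approx\log n$. You describe this two-sided scheme only for the lower bound, but it is in the upper bound that it is indispensable.

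\textbf{Lower bound.} Your meet-in-the-middle variant produces a path by gluing two half-explorations at a birthday collision, but does not explain why the glued path is the \emph{shortest-weight} path between its endpoints; in general it will not be, and your second-moment-on-paths alternative faces the same issue. The paper's central device here is Lemma~\ref{lem-lower}: any path of length $O(\log n)$ with total weight at most $\tfrac{(1-\eps)\log n}{d-2}$ is w.h.p.\ optimal between its endpoints. This decouples the task into (i) exhibiting a long path of small total weight and (ii) certifying optimality, the latter now being automatic. For (i) the paper does not do a second moment on paths: for $\alpha^*$ it runs a supercritical Galton--Watson process of ``good'' descendants inside the locally tree-like graph-distance ball $B(1,\cdot)$, and for $\widehat{\alpha}$ it applies the second moment to the number of ``nice'' source vertices whose good-descendant tree reaches depth $\widehat{\alpha}_\eps\log n$. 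Your ``main obstacle'' paragraph correctly identifies correlation control as a difficulty, but the missing ingredient is the optimality lemma; without it, neither your path second moment nor your gluing step certifies that the exhibited path is actually $\pi(u,v)$.
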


In order to compare our result with the existing ones, we reproduce here a result of Bhamidi, van der Hofstad and Hooghiemstra \cite{BHH09} concerning the number of edges in the shortest-weight path between two uniformly chosen nodes (as well as the weighted distance); see also \cite{salez12} for the joint distribution of (weighted) distances in random regular graphs. Remark that, the following theorem is stated in \cite{BHH09} in a more general setting (random graphs with i.i.d.\ degrees).

\begin{theorem}[Bhamidi, van der Hofstad and Hooghiemstra~\cite{BHH09}]\label{thm-FPP}
Fix $d \geq 3$ and let $G\sim\cG(n,d)$ be a random $d$-regular graph with $n$ vertices and i.i.d.\ rate one exponential variables on its edges. Then, as $n \rightarrow \infty$,
\begin{eqnarray}
\frac{|\pi(1,2)| - \gamma \log n}{\sqrt{\gamma \log n}} \stackrel{d}{\longrightarrow} Z,
\end{eqnarray}
where $Z$ has a standard normal distribution, $\gamma = \frac{d-1}{d-2}$, and $\stackrel{d}{\longrightarrow}$ denotes the convergence in distribution.
Furthermore, there exists a non-degenerate random variable $W$ such that
\begin{eqnarray}
\dist_w(1,2) - \frac{1}{d-2} \log n \stackrel{d}{\longrightarrow} W.
\end{eqnarray}
\end{theorem}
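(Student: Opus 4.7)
\medskip

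\noindent\textbf{Proof proposal for Theorem~\ref{thm-FPP}.}

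The plan is to couple the two shortest-weight exploration processes from vertices $1$ and $2$ to two independent continuous-time branching processes (CTBPs), grow them until they meet, and then read off both the weighted distance and the hopcount from the meeting-time analysis. Concretely, because the edge weights are i.i.d.\ exponential and the graph is constructed by the configuration model, the exploration from a vertex $v$ (revealing half-edges as they are used) is, up to time $\tfrac{1}{2(d-2)}\log n$, indistinguishable from an idealized CTBP $\mathcal{T}_v$ in which the root has $d$ children, every other individual has $d-1$ children, and each child is born at an independent $\mathrm{Exp}(1)$ time after its parent. The Malthusian parameter of this CTBP is $\lambda = d-2$ (from $\int_0^\infty e^{-\lambda t}(d-1)e^{-t}\,dt=1$), and the population size satisfies $Z_v(t)e^{-(d-2)t}\to W_v$ a.s.\ for a non-degenerate strictly positive martingale limit $W_v$.

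Second, I would perform the two-sided exploration. Run $\mathcal{T}_1$ and $\mathcal{T}_2$ simultaneously, each until its size first exceeds $n^{1/2+\delta}$; at this point, a paint-the-boundary / birthday-type argument on the configuration-model half-edges shows that the first collision between the two clusters occurs at time $T_{\mathrm{meet}}$ where
\begin{equation*}
e^{(d-2)(T_1+T_2)}W_1 W_2 \asymp n,
\end{equation*}
so that $T_1 + T_2 = \tfrac{1}{d-2}\log n - \tfrac{1}{d-2}(\log W_1 + \log W_2) + \xi_n + o(1)$, where $\xi_n$ encodes the exponential waiting time for the first boundary half-edge from side $1$ to pair with a half-edge from side $2$. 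The weighted distance is then $\mathrm{dist}_w(1,2)=T_1+T_2+\varepsilon_n$ where $\varepsilon_n\to 0$ in probability (the last half-edge and the final matching contribute only $O(1/n)$ to the weight). Combining these gives
\begin{equation*}
\mathrm{dist}_w(1,2) - \tfrac{1}{d-2}\log n \stackrel{d}{\longrightarrow} W := -\tfrac{1}{d-2}\bigl(\log W_1 + \log W_2\bigr) + \Xi,
\end{equation*}
where $\Xi$ is the limit of the extra matching term (an exponential-type correction independent of $W_1, W_2$). Non-degeneracy of $W$ follows from the non-degeneracy and independence of $W_1$, $W_2$, and $\Xi$.

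Third, I would analyze the hopcount through a spine (size-biased tree) decomposition of the CTBP. For a CTBP with offspring intensity $\mu(dt)=(d-1)e^{-t}dt$ and Malthusian parameter $\lambda=d-2$, the spine individual gives birth to its spine-child at rate $e^{-\lambda t}\mu(dt)=(d-1)e^{-(d-1)t}dt$, so along the spine births form a Poisson process of rate $d-1$. Consequently, the depth (generation) of a uniformly chosen individual alive at time $t$ is Poisson$((d-1)t)$, which by the classical Poisson CLT satisfies $(G_t-(d-1)t)/\sqrt{(d-1)t}\Rightarrow Z$. The meeting point on each side is essentially a uniformly chosen alive individual (by symmetry of the half-edge pairing), so $|\pi(1,2)|=G_{T_1}^{(1)}+G_{T_2}^{(2)}$, where the two depths are conditionally independent given $(T_1,T_2)$. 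Since $T_1+T_2 = \tfrac{1}{d-2}\log n + O_p(1)$ and $(d-1)/(d-2)=\gamma$, the mean and variance of the hopcount are both $\gamma\log n + O_p(1)$, and a CLT for sums of conditionally-independent Poissons (with the lower-order randomness in $T_1+T_2$ absorbed into the $\sqrt{\gamma\log n}$ scaling) yields the stated Gaussian limit.

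The main obstacle is the coupling with the CTBP past the size $n^{1/2}$ threshold, where depletion of half-edges starts to bias the exploration. Controlling this requires (i) a careful configuration-model coupling that tracks the small fraction of already-used half-edges, showing the total-variation error is $o(1)$ up to the stopping time when the clusters have combined size $n^{1/2+\delta}$; and (ii) proving that with high probability the \emph{first} meeting is realized by a uniform pair of boundary half-edges, so that the depth of the meeting vertex is well-approximated by the spine/size-biased distribution (rather than by an atypically shallow or deep vertex). A secondary, but manageable, technical point is uniform integrability of $Z_v(t)e^{-(d-2)t}$, needed to pass from a.s.\ convergence of $W_v$ to the joint distributional convergence required for the limits above.
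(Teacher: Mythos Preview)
The paper does not prove Theorem~\ref{thm-FPP} at all: it is quoted from \cite{BHH09} purely for context (``In order to compare our result with the existing ones, we reproduce here a result of Bhamidi, van der Hofstad and Hooghiemstra''), and the proofs in Sections~\ref{sec:upper}--\ref{sec:lower} address only Theorem~\ref{thm-main}. So there is no ``paper's own proof'' to compare your proposal against.

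That said, your outline is a faithful sketch of the strategy actually used in \cite{BHH09}: couple the two one-sided explorations to independent CTBPs with Malthusian parameter $d-2$, identify the collision time via the martingale limits $W_1,W_2$ to get the weighted-distance limit, and use the spine/size-biased decomposition (spine births at rate $d-1$, hence depth at time $t$ is Poisson$((d-1)t)$) together with $T_1+T_2=\tfrac{1}{d-2}\log n+O_p(1)$ to obtain the Poisson CLT for the hopcount with mean and variance $\gamma\log n$. The obstacles you flag (coupling past size $n^{1/2}$, showing the collision vertex is a size-biased pick) are exactly the ones \cite{BHH09} has to handle. If you want to compare to something in the present paper, note that the authors use only the discrete consequence of this machinery, namely Lemma~\ref{lem-gen-SWG} and the coupling $H_k(u)\stackrel{d}{=}\sum I_i$ with $\PP(I_i=1)=(d-1)/S_i(u)$, which is the Bernoulli-sum analogue of your Poisson spine; they never need the full CTBP martingale limits $W_v$ or the distributional limit $W$.
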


By the above theorem, the ratio of the length and the weight along a shortest-weight path between two (uniformly chosen) nodes is asymptotically $d-1$ while this ratio for a minimum length path between two nodes is asymptotically $1$. Our proof of Theorem~\ref{thm-main} (see Section~\ref{sec:lower}) implies that, there exists with high probability (that is, with probability tending to $1$ as $n \to \infty$) shortest-weight paths of length about $\widehat{\alpha}  \log n$ whose total weight is about $\frac{1}{d-2} \log n$ (typical weighted distance between two uniformly chosen nodes).  This means that, for these paths,  the ratio of the length and the weight is even larger, i.e., asymptotically $(d-2)\widehat{\alpha}$!

For completeness, we also include results of Ding, Kim, Lubetzky and Peres \cite{DKLP09} concerning the weighted diameter in random regular graphs; see also \cite{amle11} for a generalization.

\begin{theorem} [Ding, Kim, Lubetzky and Peres~\cite{DKLP09}]\label{thm-DKLP}
Fix $d \geq 3$ and let $G\sim\cG(n,d)$ be a random $d$-regular graph with $n$ vertices and i.i.d.\ rate one exponential variables on its edges. Then, as $n \rightarrow \infty$, we have
\begin{eqnarray}
\frac{\max_{j \in [n]} \dist_w(1,j)}{\log n} \stackrel{p}{\longrightarrow} \frac{1}{d-2} + \frac{1}{d},
\end{eqnarray}
and
\begin{eqnarray}
\frac{\max_{i,j \in [n]} \dist_w(i,j)}{\log n} \stackrel{p}{\longrightarrow}\frac{1}{d-2} + \frac{2}{d}.
\end{eqnarray}
\end{theorem}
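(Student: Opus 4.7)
The plan is to couple the shortest-weight exploration from any fixed vertex with a continuous-time branching process (CTBP) in which the root has $d$ children, every other individual has $d-1$ children, and each child is born after an independent $\mathrm{Exp}(1)$ delay. The Malthusian parameter equals $d-2$, consistent with the $\frac{1}{d-2}\log n$ weight scale of Theorems~\ref{thm-FPP} and~\ref{thm-DKLP}. Under this coupling, $|\pi(1,j)|$ is (up to vanishing graph-collision effects) the generation of $j$ in the CTBP, and the root-path weight is $\Gamma(|\pi(1,j)|,1)$-distributed. The whole argument then reduces to counting CTBP individuals with controlled generation and weight.

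The key quantitative input is the following. For $k=\alpha\log n$ and $T=\frac{1}{d-2}\log n$, Cramér's bound $\PP(\Gamma(k,1)\le kx)=e^{-kI(x)+o(k)}$ with $I(x)=x-1-\log x$ gives
\begin{equation*}
(d-1)^k\cdot\PP(\Gamma(k,1)\le T)=n^{-f(\alpha)+o(1)}.
\end{equation*}
Thus the expected number of CTBP individuals at generation $k$ born before time $T$ is $n^{-f(\alpha)+o(1)}$: this vanishes for $\alpha>\alpha^*$ and diverges for $\alpha<\alpha^*$.

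For the single-source statement, the upper bound $\max_j|\pi(1,j)|\le(\alpha^*+\varepsilon)\log n$ whp follows by applying Markov to this first moment once one separates the SWT into (i) the ``typical'' regime of vertices with $\dist_w(1,j)\le\frac{1}{d-2}\log n+O(1)$, where the CTBP count at generation $\ge\alpha\log n$ upper-bounds the SWT count, and (ii) the $o(n)$ ``far'' vertices with larger weighted distance, whose excess weight is concentrated on a single long terminal edge (as in the proof of Theorem~\ref{thm-DKLP}), forcing their hopcount to be $(\gamma+o(1))\log n<\alpha^*\log n$. The matching lower bound is a second-moment computation in the CTBP: two generation-$k$ individuals whose lowest common ancestor sits at generation $r$ have weights sharing a $\Gamma(r,1)$ prefix and independent $\Gamma(k-r,1)$ tails, yielding $\EE[N_k^2]=O(\EE[N_k]^2)$, which combined with a splitting of the CTBP at depth $\log\log n$ and Paley--Zygmund across the $(d-1)^{\log\log n}$ resulting independent sub-BPs promotes positive probability to high probability.

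For the pair-max, the upper bound is immediate from a union over the $n$ sources: the expected number of pairs $(i,j)$ with $|\pi(i,j)|\ge\alpha\log n$ equals $n\cdot n^{-f(\alpha)+o(1)}=n^{1-f(\alpha)+o(1)}$, which tends to $0$ precisely when $f(\alpha)>1$, i.e.\ $\alpha>\widehat\alpha$. The matching lower bound is the hardest part of the argument. The strategy is to fix a sparse family of $\Theta(n)$ starting vertices whose radius-$o(\log n)$ neighbourhoods are pairwise disjoint, so that the initial portions of their SWT explorations are essentially independent CTBPs, and then to apply a Chebyshev-type argument across these sources. This requires sharpening the one-source estimate into a rare-event lower bound $\PP(\max_j|\pi(1,j)|\ge\alpha\log n)\ge n^{-f(\alpha)+o(1)}$ valid up to $\alpha<\widehat\alpha$. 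For $\alpha\in(\alpha^*,\widehat\alpha)$ the first moment $\EE[N_k]=n^{-f(\alpha)}$ is already subpolynomially small, so a plain second moment does not suffice: one must identify the typical genealogy of an atypically deep and lightweight CTBP individual---essentially a conditional limit argument---and then transfer the estimate back to $\cG(n,d)$. This is the technical core of the proof.
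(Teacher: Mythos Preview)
There is a basic mismatch here. The statement you were asked to prove, Theorem~\ref{thm-DKLP}, concerns the \emph{weighted distances} $\dist_w(1,j)$ and $\dist_w(i,j)$, with limiting constants $\frac{1}{d-2}+\frac{1}{d}$ and $\frac{1}{d-2}+\frac{2}{d}$. Your proposal, however, is entirely about the \emph{hopcounts} $|\pi(1,j)|$ and $|\pi(i,j)|$ and the thresholds $\alpha^{*}$ and $\widehat{\alpha}$; these are the quantities of Theorem~\ref{thm-main}, not of Theorem~\ref{thm-DKLP}. Indeed, you even invoke ``the proof of Theorem~\ref{thm-DKLP}'' as an external ingredient, which makes clear you are treating it as a known input rather than the object to be established. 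Nowhere in your sketch do the constants $\frac{1}{d-2}+\frac{1}{d}$ or $\frac{1}{d-2}+\frac{2}{d}$ appear, and nothing in the argument addresses why the \emph{weight} of the longest shortest-weight path picks up the extra $\frac{1}{d}$ (respectively $\frac{2}{d}$) term.

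Moreover, the paper itself does \emph{not} prove Theorem~\ref{thm-DKLP}: it is quoted for completeness from Ding--Kim--Lubetzky--Peres~\cite{DKLP09}, so there is no ``paper's own proof'' to compare against. If your intent was to sketch Theorem~\ref{thm-main}, that is a separate discussion; as a proof of Theorem~\ref{thm-DKLP}, the proposal simply addresses the wrong quantity.
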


In particular, the result of \cite{DKLP09} implies that there exists with high probability shortest-weight paths of length about $\frac{d-1}{d-2}  \log n$ (the same as the length between two uniformly chosen nodes, see Theorem~\ref{thm-FPP}) whose total weight is about $\left(\frac{1}{d-2} + \frac{2}{d}\right) \log n$. This result is used in \cite{ADL11} to analyze an asynchronous randomized broadcast algorithm for random regular graphs.

\paragraph{Related work.}

First passage percolation model has been mainly studied  on lattices motivated by its subadditive property and its link to a number of other stochastic processes, see e.g., \cite{grimkest84, kesten86, hagpem98} for a more detailed discussion.
First passage percolation with exponential weights has received substantial attention, in particular on the complete graph \cite{FG85, Jan99, AldBha2007, ABL10, Ding11, Peres10}, and more recently on random graphs \cite{bhamidi08, BHH09, BHH10, DKLP09, amle11, Antun11}. In particular, Janson \cite{Jan99} considered the
case of the complete graph with fairly general i.i.d.\ weights on edges, including the exponential distribution with parameter one. It is shown that, when $n$ goes to infinity, the
asymptotic distance  for two given points is $\log n / n$, that the maximum distance if one point is fixed and the other varies is $2\log n / n$, and the maximum distance over all pairs of points is $3\log n / n$. He also derives asymptotic results for the corresponding number of hops or hopcount (the number of edges on the paths with the smallest weight). It is shown that (when $n$ goes to infinity) the number of hops is $\log n$ for two given nodes, and the maximum hops if one point is fixed and the other varies is $e\log n$. More recently, Addario-Berry, Broutin and Lugosi \cite{ABL10} showed that the longest of these shortest-weight paths in a complete graph has about $\widetilde{\alpha} \log n$ edges where $\widetilde{\alpha} \sim 3.5911$ is the unique solution of the equation $\alpha \log(\alpha) - \alpha =1$, which answered a question posed by Janson \cite{Jan99}. Note that $\alpha^* \to e$ and $\widehat{\alpha} \to \widetilde{\alpha}$ as $d \to \infty$.

\paragraph{Organization of the paper.}

The remainder of the paper is organized as follows.
In the next section we provide several preliminary facts on random regular graphs.
We also consider in this section the exploration process for configuration model which consists in growing balls (neighborhoods) simultaneously from each vertex. In addition, the section provides some necessary notations and definitions that will be used throughout the paper. Sections~\ref{sec:upper} and~\ref{sec:lower} form the heart of the proof. We first prove that the above bound is an upper bound in Sections~\ref{sec:upper}. The final section provides the corresponding lower bound using the second moment method, applied to a suitably defined set of shortest paths with special properties that make them amenable to analysis.

\paragraph{Basic notations.}

Let $\{ X_n \}_{n \in \mathbb{N}}$ be a sequence of real-valued random variables on a sequence of probability spaces
$\{ (\Omega_n, \mathbb{P}_n)\}_{n \in \mathbb{N}}$.
If $c \in \mathbb{R}$ is a constant, we write $X_n \stackrel{p}{\rightarrow} c$ to denote that $X_n$ \emph{converges in probability to $c$}.
That is, for any $\eps >0$, we have $\mathbb{P}_n (|X_n - c|>\eps) \rightarrow 0$ as $n \rightarrow \infty$.
\\
Let $\{ a_n \}_{n \in \mathbb{N}}$ be a sequence of real numbers that tends to infinity as $n \rightarrow \infty$.
We write $X_n = o_p (a_n)$ if $|X_n|/a_n$ \emph{converges to 0 in probability}.
Additionally, we write $X_n = O_p (a_n)$ to denote that for any positive-valued function $\omega (n) \rightarrow \infty$,
as $n \rightarrow \infty$, we have $\mathbb{P} (|X_n|/a_n \geq \omega (n)) = o(1)$.
If $\mathcal{E}_n$ is a measurable subset of $\Omega_n$, for any $n \in \mathbb{N}$, we say that the sequence
$\{ \mathcal{E}_n \}_{n \in \mathbb{N}}$ occurs \emph{with high probability  (w.h.p.)}\ if $\mathbb{P} (\mathcal{E}_n) = 1-o(1)$, as
$n\rightarrow \infty$.

\noindent 
The notation $\Bin (k,p)$ denotes a binomially distributed random variable corresponding to the number of
successes of a sequence of $k$ independent Bernoulli trials each having probability of success equal to $p$.

\noindent We recall here that for two real-valued random variables $A$ and $B$, we say $A$ is stochastically dominated by $B$ and write $A\leq_{st} B$ if
for all $x$, we have $\PP(A\geq x)\leq \PP(B\geq x)$. If $C$ is another random
variable, we write $A\leq_{st} (B\,|\,C)$ if for all $x$, $\PP(A\geq x)\leq
\PP(B\ge x\,|\,C)$ almost surely.

\section{Preliminaries}

\subsection{Configuration model}

We recall first the setup of
the {\it configuration model} (CM), as  introduced by Bender and Canfield\cite{BC78} and Bollob\'as \cite{Bol01}. To
construct a graph using this method, to each of the $n$ (even)
vertices allocate $d$ distinct half-edges, and select a uniform
perfect matching on these points. When a half-edge of $i$ is paired with a half-edge of $j$, we interpret this as an edge between
$i$ and $j$.

The random graph obtained following this procedure may not be simple, i.e., may
contain self-loops due to the pairing of two half-edges of $i$, and multi-edges due to the existence
of more than one pairing between two given nodes. Conditional on
the event that the graph produced is simple, it is uniformly distributed over the set of all $d$-regular graphs on $n$ vertices.
The probability of this event is
uniformly bounded away from zero, equivalent to
$(1+o(1))\exp\left(\frac{1-d^2}{4}\right)$ as $n$ tends to infinity \cite{Wor99}. Hence,
any event that holds w.h.p.\ for the graph obtained
via the configuration model also holds w.h.p.\ for $G\sim\cG(n,d)$.

Note that the assumption $d \geq 3$ implies that $G\sim\cG(n,d)$ is connected with high probability~\cite{Bol01, Wor99}. We will assume this in what follows.

The advantage of using the configuration model is that it allows one to construct the graph gradually, exposing the edges of the perfect matching one at a time. This way, each additional edge is uniformly distributed among all possible edges on the remaining (unmatched) half-edges.

\subsection{Neighborhoods and tree excess}

For $a,b \in V$, let $\dist(a,b) = \dist_G(a,b)$ denote the typical distance between $a$ and $b$. For a vertex $a \in V$ and an integer number $m$, the $m$-step neighborhood of $a$, denoted by $B(a,m)$ and its boundary $\partial B(a,m)$, are defined as
\begin{eqnarray}
B(a,m) := \{v \in V \mid \dist(a,v) \leq m\}, \ \mbox{and}, \ \partial B(a,m) := B(a,m)\backslash B(a,m-1).
\end{eqnarray}

For a vertex $a\in V$ and a real number $t>0$, the $t$-radius neighborhood of $a$ in the weighted graph, or the ball of radius $t$ centered at $a$,  is defined as
$$B_w(a,t) := \bigl\{\,b,\:\dist_w(a,b)\leq t\,\bigr\}.$$

The first time $t$ where the ball $B_w(a,t)$ reaches size $k+1\geq 1$ will be denoted by $T_k(a)$, i.e.,
$$T_k(a) = \min\, \bigl\{\,t: |B_w(a,t)| \geq k+1 \,\bigr\}, \qquad T_0(a) = 0.$$

Note that there is a vertex in $B_w(a,T_k(a))$ which is not in any ball of smaller radius around $a$. When the weights are i.i.d.\ according to a random variable with continuous density, this vertex is in addition unique with probability one. We will assume this in what follows.
Let $v_k(a)$ denote this node. Furthermore, let $H_k(a)$ denote the number of edges (hopcounts) in the shortest path between the node $a$ and $v_k(a)$, i.e., the generation of $v_k(a)$.

For a connected graph $F$, the tree excess of
$F$ is denoted by $tx(F)$, which is the maximum number of edges that can be deleted from $F$ while still keeping it connected. By an abuse of notation, for a subset $W \subseteq V$, we denote by $tx(W)$ the tree excess of the induced subgraph $G[W]$ of $G$ on $W$. (If $G[W]$ is not connected, then $tx(W) :=\infty$.)

We need the following lemma which demonstrates the well known locally tree-like properties of $G\sim G(n,d)$ for $d \geq 3$.

\begin{lemma}\label{lem-treeexcess}
Let $G\sim G(n,d)$ for some fixed $d \geq 3$, and let $m=\lfloor \frac{1}{5} \log_{d-1} n\rfloor$. Then w.h.p., $\tx (B(u, m)) \leq 1$ for all $u \in V(G)$.
\end{lemma}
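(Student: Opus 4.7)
The plan is to prove the lemma using the configuration model exploration. I would reveal the ball $B(u,m)$ by pairing half-edges one at a time in a breadth-first manner starting from $u$, and argue that for each individual pairing, the probability of creating a cycle (i.e., of contributing $+1$ to the tree excess) is very small; then I would bound the probability of $\geq 2$ such cycle-creating pairings and take a union bound over all $n$ choices of $u$.

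First, a deterministic observation: regardless of the outcome of the exploration, the ball satisfies
\[
|B(u,m)| \leq 1 + d + d(d-1) + \cdots + d(d-1)^{m-1} \leq C_d (d-1)^m \leq C_d \, n^{1/5},
\]
for a constant $C_d$ depending only on $d$, since $m = \lfloor \tfrac{1}{5}\log_{d-1} n \rfloor$. Hence the total number of half-edges incident to vertices in $B(u,m)$ is at most $d\,C_d (d-1)^m = O(n^{1/5})$, and to determine the induced subgraph $G[B(u,m)]$ it suffices to expose at most this many pairings. At any step of the exploration, conditional on the past, the pairing is uniform among all remaining unmatched half-edges; there are at most $O((d-1)^m)$ unmatched half-edges currently incident to the explored ball, and $nd - O((d-1)^m) = \Theta(n)$ unmatched half-edges in total. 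Consequently, the conditional probability that the new pairing is \emph{internal} (both endpoints already in the ball, which in the BFS setup is exactly the event that the new edge is not a tree edge and therefore closes a cycle) is $O((d-1)^m / n) = O(n^{-4/5})$.

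Since $\tx(B(u,m))$ equals the number of internal pairings produced during this exploration, we can bound the probability of two or more such pairings by summing over ordered pairs of steps $(i,j)$: conditioning on the earlier step, the probability that both steps are internal is at most the product of the two stepwise bounds $O((d-1)^m/n)^2 = O(n^{-8/5})$, and the number of such pairs is $O((d-1)^{2m}) = O(n^{2/5})$. Therefore
\[
\Pr\!\bigl[\tx(B(u,m)) \geq 2\bigr] = O(n^{2/5}) \cdot O(n^{-8/5}) = O(n^{-6/5}).
\]
A union bound over the $n$ choices of $u$ then gives $\Pr[\exists u:\tx(B(u,m)) \geq 2] = O(n^{-1/5}) = o(1)$, which proves the claim for the configuration model and hence for $\cG(n,d)$ by conditioning on simplicity.

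The main obstacle is not analytic but bookkeeping: one must verify that an appropriate BFS-type exploration can be stopped in such a way that the half-edges exposed are exactly those needed to reconstruct the induced subgraph on $B(u,m)$ (including edges within the outermost level $\partial B(u,m)$), and that under this exploration the ``internal pairings'' are in bijection with the non-tree edges of $G[B(u,m)]$, so that counting them indeed computes $\tx(B(u,m))$. Once this correspondence is fixed, the probabilistic estimates are elementary, using only that $(d-1)^{2m} = O(n^{2/5}) = o(n)$.
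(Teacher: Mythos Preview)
Your argument is correct and is the standard configuration-model computation for this fact. The paper itself does not give a proof at all: it simply cites \cite[Lemma~2.1]{LubetzkySly10}. What you have written is essentially the argument one finds in that reference (or in many other places), so there is nothing to compare at the level of strategy---you have supplied a self-contained proof where the paper chose to quote one.

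One small remark on the bookkeeping point you raise at the end: to capture edges between two vertices of $\partial B(u,m)$ you can simply continue the exploration and expose \emph{all} half-edges incident to $B(u,m)$, not just those emanating from levels $0,\dots,m-1$. This is still at most $d\,|B(u,m)| = O(n^{1/5})$ pairings, so your probability bounds are unaffected; pairings that land outside $B(u,m)$ are discarded, and those that land inside are exactly the surplus edges. With that convention the bijection between ``internal pairings'' and non-tree edges of $G[B(u,m)]$ is immediate, and no further care is needed. The union bound over $u$ does not require any independence between the explorations from different roots, so revealing these extra pairings causes no difficulty.
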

\begin{proof}
See \cite[Lemma 2.1]{LubetzkySly10} .
\end{proof}

Consider now the growing balls $B_w(a,T_k(a))$ for $0\leq k\leq n-1$ centered at $a$ and let $X_k(a)$ be the tree excess of $B_w(a,T_k(a))$, i.e.,
$$X_k(a) : = tx\,(\,B_w(a,T_k(a))\,).$$

The number of edges crossing the boundary of the ball $B_w(a, T_k(a))$ is denoted by $S_k(a)$. A simple calculation shows that (for $G\sim\cG(n,d)$)
\begin{eqnarray}\label{eq:boundary}
\label{eq:S(a)}S_k(a) = d + (d-2)k  - 2 X_k(a).
\end{eqnarray}

\subsection{Shortest-weight paths on a tree}
Assume we have positive integers $d_1, d_2, ...$. We consider the following construction of a branching process (with these degrees) in discrete time:
\begin{itemize}
  \item At time 0, start with one alive vertex (the root);
  \item At each time step $k$, pick one of the alive vertices at random, this vertex dies giving birth to $d_k$ children.
\end{itemize}

This type of random tree is known as (random) increasing trees which have been well-studied, see e.g. \cite{Broutin08, Drmota09, Flajolet09}.
We will need the following basic result, the proof of which is easy and can be found for example in \cite[Proposition 4.2]{BHH09}.
Let $s_k:= d_1 + ... + d_k - (k-1)$.

\begin{lemma}\label{lem-gen-SWG}
Pick an alive vertex at time $k \geq 1$¸ uniformly
at random among all vertices alive at this time. Then, the generation of the $k$-th chosen vertex is equal in distribution to
$$G_k \stackrel{d}{=} \sum_{i=1}^{k} I_i,$$
where $\{I_i\}_{i=1}^{\infty}$ are independent Bernoulli random variables with parameter
$$\PP(I_i = 1) = \frac{d_i}{s_i}.$$
\end{lemma}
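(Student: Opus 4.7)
The plan is to prove the lemma by induction on $k$, establishing the distributional recursion $G_k \stackrel{d}{=} G_{k-1} + I_k$ with $I_k \sim \mathrm{Ber}(d_k/s_k)$ independent of $G_{k-1}$, starting from $G_0 = 0$ (the root, the only alive vertex at time $0$, has generation $0$). Iterating the recursion then yields $G_k \stackrel{d}{=} \sum_{i=1}^{k} I_i$ with the $I_i$ mutually independent, as claimed.

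To obtain the recursion, I would decompose the uniformly chosen alive vertex $U_k$ at time $k$ according to whether it was alive at time $k-1$ or was just born at step $k$. After step $k$, the alive set has size $s_k = s_{k-1} - 1 + d_k$, consisting of the $d_k$ children just produced by the vertex $V_k$ picked at step $k$, together with the $s_{k-1} - 1$ vertices that were alive at time $k-1$ and were not picked. Consequently,
\[
\mathrm{gen}(U_k) \stackrel{d}{=} \begin{cases} \mathrm{gen}(V_k) + 1 & \text{with probability } d_k/s_k, \\ \mathrm{gen}(U_k^{\mathrm{old}}) & \text{with probability } 1 - d_k/s_k, \end{cases}
\]
where $U_k^{\mathrm{old}}$ is a uniform draw from the $s_{k-1} - 1$ surviving alive vertices at time $k-1$.

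The key observation is that both $\mathrm{gen}(V_k)$ and $\mathrm{gen}(U_k^{\mathrm{old}})$ have marginal distribution $G_{k-1}$. The first is immediate, since $V_k$ is a uniform draw from the alive set at time $k-1$. The second follows from the exchangeability of uniform sampling without replacement: the ordered pair $(V_k, U_k^{\mathrm{old}})$ is a uniform draw of two distinct vertices from the $s_{k-1}$ alive vertices at time $k-1$, so each coordinate is marginally uniform. Combining the two cases, the mixture coincides with the distribution of $G_{k-1} + I_k$ for $I_k \sim \mathrm{Ber}(d_k/s_k)$ independent of $G_{k-1}$, giving the recursion.

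The main subtlety is this exchangeability step: a priori one might worry that removing the randomly chosen $V_k$ biases the generation distribution among the surviving alive vertices, but the without-replacement exchangeability argument rules this out. Once the distributional recursion is established, the lemma follows by a routine induction, noting that at each step the newly introduced $I_k$ is independent of the previously constructed $I_1, \dots, I_{k-1}$ since the mixing probability $d_k/s_k$ depends only on the deterministic sequence $(d_i)$.
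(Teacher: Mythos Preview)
Your proof is correct. The paper does not actually give its own proof of this lemma; it simply cites \cite[Proposition 4.2]{BHH09} and calls the result ``basic'' and ``easy''. Your inductive argument, decomposing the uniform pick at time $k$ into ``new child'' versus ``surviving old vertex'' and using the without-replacement exchangeability to see that both branches have marginal law $G_{k-1}$, is precisely the standard proof one finds in that reference, so there is nothing to compare.
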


In what follows, instead of taking a graph at random and then analyzing the balls, we use a standard coupling argument in random graph theory which allows to build the balls and the graph at the same time.
Fix two vertices, say $u$ and $v$. We grow the balls around these vertices simultaneously at rate 1, so that at time $t$, $B_w(u, t)$ and $B_w(v, t)$ are the constructed balls from $u$ and $v$. When these two balls intersect via the formation of an edge $(u^*_v,v^*_u)$ between two vertices $u^*_v \in B_w(u, .)$ and $v^*_u \in B_w(v, .)$, then the shortest-weight path between the two vertices has been found. Furthermore, we have
$$|\pi(u,v)| = |\pi(u,u^*_v)| + |\pi(v,v^*_u)| + 1.$$

\subsection{The exploration process}\label{sse:explor}

Fix a vertex $a$, and consider the following continuous-time exploration process. At time $t=0$, we have a neighborhood consisting only of $a$, and
for $t>0$, the neighborhood is precisely $B_w(a, t)$. We now give an equivalent description of this process.

\begin{itemize}
\item {Start with $B = \{a\}$, where $a$ has $d$ half-edges. For each half edge, decide (at random depending on the previous choices) if the half-edge is matched to a half-edge adjacent to $a$ or not. Reveal the matchings consisting of those half-edges adjacent to $a$ which are connected amongst themselves (creating self-loops at $a$) and assign weights independently at random to these edges. The remaining  unmatched half-edges adjacent to $a$ are stored in a list $L$. (See the next step including a more precise description of this first step.)}

\item Repeat the following exploration step as long as the list $L$ is
  not empty.
\item[] Given there are $\ell\geq 1$ half-edges in the current list, say
  $L=(h_1,\dots, h_\ell)$, let $\Psi \sim \mathrm{Exp}(\ell)$ be an
  exponential variable with mean $\ell^{-1}$. After time $\Psi$ select
  a half-edge from $L$ uniformly at random, say $h_i$. Remove $h_i$
  from $L$ and match it to a
  uniformly chosen half-edge in the entire graph excluding $L$, say $h$. Add
  the new vertex (connected to $h$) to $B$ and reveal the
  matchings (and weights) of any of its half-edges whose matched half-edge is
  also in $B$. More precisely, let $2x$ be the number of already matched half-edges in $B$ (including
  the matched half-edges $h_i$ and $h$). There is a
  total of $dn-2x$ unmatched half-edges. Consider one of the
  $d-1$ half-edges of the new vertex (excluding $h$ which is connected
  to $h_i$); with probability $(\ell-1)/(dn-2x-1)$ it is matched with a
  half-edge in $L$ and with the complementary
  probability it is matched with an unmatched half-edge outside
  $L$. In the first case, match it to a uniformly chosen half-edge of
  $L$ and remove the corresponding half-edge from $L$. In the second
  case, add it to $L$. We proceed in the similar manner for all the
  $d-1$ half-edges of the new vertex.
\end{itemize}

To verify the validity of the above process, let $B_t(a)$ and $L(a,t)$ be respectively the set of vertices and the
list generated by the above procedure at time $t$, where $a$ is the
initial vertex. Considering the usual configuration model and using the memoryless
property of the exponential distribution, we have $B_w(a,t)=B_t(a)$ for all $t$. To see this, we can continuously grow
the weights of the half-edges $h_1, \dots, h_{\ell}$ in $L$ until one
of their rate $1$ exponential clocks fire. Since the minimum of $\ell$
i.i.d exponential variables with rate 1 is exponential with rate
$\ell$, this is the same as choosing uniformly a half-edge $h_i$ after
time $\Psi$ (recall that by our conditioning, these $\ell$ half-edges
do not pair within themselves). Note that the final weight of an edge
is accumulated between the time of arrival of its first half-edge and
the time of its pairing (except edges going back into $B$ whose
weights are revealed immediately). Then the equivalence follows from
the memoryless property of the exponential distribution.

Note that $T_i(a)$ is the time of the $i$-th  exploration step in the
above continuous-time exploration process.
Assuming $L(a,T_i(a))$ is not empty, at time $T_{i+1}(a)$, we match a uniformly
chosen half-edge from the set $L(a,T_i(a))$ to a uniformly chosen
half-edge among all other half-edges, excluding those in
$L(a,T_i(a))$. Let $\cF_{t}$ be the $\sigma$-field generated by the
above process until time $t$.
Given $\cF_{T_i(a)}$, $T_{i+1}(a)-T_i(a)$ is an
exponential random variable with rate $S_i(a)$ given by Equation (\ref{eq:S(a)}) which is equal to $|L(a,T_i(a))|$ the size of the list consisting of unmatched half-edges in $B_{T_i(a)}(a)$. In other words,
\[ \bigl( T_{i+1}(a)-T_i(a)\, |\, \cF_{T_i(a)} \bigr) \stackrel{d}{=} \mathrm{Exp}(S_i(a)), \]
this is true since the minimum of $k$ i.i.d.\ rate one exponential  random variables is an exponential of rate k.

We will need the following coupling lemma the proof of which can be found in \cite[Proposition 4.5]{BHH09}.
\begin{lemma}[Coupling shortest-weight graphs on a tree and CM]
For a uniformly chosen vertex $u$, we have (for all $k \geq 1$)
$$H_k(u) \stackrel{d}{=} \sum_{i=1}^{k} I_i,$$
where $\{I_i\}_{i=1}^{\infty}$ are independent Bernoulli random variables with parameter
$$\PP(I_i = 1) = \frac{d-1}{S_i(u)},$$
and $S_i(u)$ is given by Equation~\ref{eq:boundary}.
\end{lemma}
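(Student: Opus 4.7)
The plan is to couple the continuous-time exploration of Section~\ref{sse:explor} to the increasing-tree process of Lemma~\ref{lem-gen-SWG}, by promoting each unmatched half-edge of $L$ to an \emph{alive node} whose depth records the BFS hopcount from $u$ of the vertex that owns it. Initially, $u$ contributes $d$ alive half-edges, all at depth $0$. At the $i$-th exploration step, the uniformly chosen half-edge $h_i \in L(u,T_{i-1}(u))$ dies, is paired with a half-edge of the freshly discovered vertex $v_i$, and is replaced by $d_i := (d-1) - 2\bigl(X_i(u) - X_{i-1}(u)\bigr)$ new half-edges of $v_i$ (those not lost to a back-match into $L$), all sitting at depth one greater than $h_i$. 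Combining this bookkeeping with Equation~(\ref{eq:boundary}) gives $s_i := d + (d-2)i - 2 X_i(u) = S_i(u)$, so the process is precisely an increasing tree in the sense of Lemma~\ref{lem-gen-SWG}, with random degree sequence $(d_i)_{i\geq 1}$ and initial mass $d$ (rather than $1$).

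Next, I would read off $H_k(u)$ from the coupling. By construction, $v_k=v_k(u)$ is attached to $B_w(u,T_{k-1}(u))$ via $h_k$, so $H_k(u) = 1 + (\text{depth of } h_k \text{ in the increasing tree})$. Since $h_k$ is uniform among the $S_{k-1}(u)$ alive half-edges after step $k-1$, a direct adaptation of Lemma~\ref{lem-gen-SWG} to the case of $d$ initial alive nodes yields, conditionally on the process $(S_i(u))_{i\geq 1}$,
\[
\text{depth of } h_k \stackrel{d}{=} \sum_{i=1}^{k-1} J_i, \qquad J_i \sim \mathrm{Bernoulli}\!\left(\tfrac{d_i}{S_i(u)}\right),\ \text{independent.}
\]

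The final step is to replace the random numerator $d_i$ by the deterministic value $d-1$. The key is a deferred-revelation argument: exploiting the exchangeability of the configuration-model pairing, one can postpone the decision of which of $v_i$'s $d-1$ fresh half-edges back-match to $L$ until \emph{after} the pick at step $i+1$. Under this modified revelation order, the joint law of $(H_k(u), S_k(u))_k$ is preserved, while the probability that the pick at step $i+1$ lies on one of $v_i$'s freshly sprouted half-edges becomes exactly $(d-1)/S_i(u)$. After a trivial reindexing to absorb the initial edge $(u,v_1)$ into the sum, this yields the claimed identity $H_k(u) \stackrel{d}{=} \sum_{i=1}^{k} I_i$ with independent $I_i \sim \mathrm{Bernoulli}\bigl((d-1)/S_i(u)\bigr)$. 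The main obstacle, I expect, is precisely this deferred-revelation step: one must verify carefully that rearranging the order in which back-matches are revealed preserves both the joint distribution of the hopcounts and that of the boundary sizes $S_i(u)$, which rests on the memorylessness of the exponential weights and the exchangeability built into the configuration model.
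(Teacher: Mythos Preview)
The paper does not prove this lemma itself; it defers entirely to \cite[Proposition~4.5]{BHH09}. So there is no in-paper argument to compare against, and your sketch must stand on its own.

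The coupling of half-edges in $L$ to alive nodes of an increasing tree is the right framework, but the bookkeeping in your first paragraph is off. When a back-match occurs at step $i$, a fresh half-edge of $v_i$ is paired with a half-edge \emph{already in $L$}, and both are removed. So step $i$ does not consist of ``$h_i$ dies and is replaced by $d_i=(d-1)-2(X_i-X_{i-1})$ children at depth $H_i(u)$''; rather, $h_i$ dies, $d-1$ fresh half-edges appear at depth $H_i(u)$, and then $m_i:=X_i(u)-X_{i-1}(u)$ additional \emph{pairs} of alive half-edges are deleted, one fresh and one old in each pair. Your choice of $d_i$ reproduces the correct list size $s_i=S_i(u)$, but it misrepresents the depth profile of $L$---the pruned old half-edges carry depths different from $H_i(u)$---so Lemma~\ref{lem-gen-SWG} does not apply with those $d_i$. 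Your deferred-revelation repair does not land on $(d-1)/S_i(u)$ either: postponing all back-matches leaves a list of size $d+(d-2)i$ at the moment of the next pick, while postponing only the step-$i$ back-matches until after the step-$(i{+}1)$ pick leaves a list of size $S_i(u)+2m_i$; neither is $S_i(u)$. You have correctly located the crux of the argument, but the exchangeability step that converts the random numerator into the deterministic $d-1$ while keeping $S_i(u)$ in the denominator needs a sharper formulation than the one sketched here; that is exactly what the cited proposition in \cite{BHH09} supplies.
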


\section{Proof of the upper bound}\label{sec:upper}
In this section we present the proof of the upper bound for Theorem \ref{thm-main}.

As described above, we grow the balls around each vertex simultaneously (at rate one) so that at time $t$, $B_t(a) = B_w(a,t)$ is the ball constructed from vertex $a$.


We let $q:= \lfloor 2 \sqrt{dn \log n} \rfloor$. The following lemma says that for all vertices $u$ and $v$, the growing balls centered at $u$ and $v$ intersect w.h.p.\ provided that they contain each at least $q$ nodes. More precisely,

\begin{lemma}\label{lemup-intersect}
We have with high probability
\begin{eqnarray}
B_w(u, T_q(u)) \cap B_w(v, T_q(v)) \neq \emptyset, \text{ for all } u \text{ and  } v.
\end{eqnarray}
\end{lemma}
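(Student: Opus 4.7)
The plan is a birthday-paradox style estimate combined with a union bound over all $\binom{n}{2}$ unordered pairs of vertices. Fix distinct $u,v$. Exploiting the exchangeability of the configuration model, I would first run the exploration from $v$ until $|B_w(v, T_q(v))| = q+1$ and freeze the resulting ball, and then run the exploration from $u$, bounding the probability that $B_w(u, T_q(u))$ fails to hit $B_w(v, T_q(v))$. Because a primary matching in $u$'s exploration that lands on a half-edge of $L(v, T_q(v))$ produces a new vertex lying in both balls, it suffices to lower-bound the per-step probability of this event and iterate.

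Conditional on $\cF_{T_q(v)}$ and on no intersection through step $i$ of $u$'s exploration, the primary matching at step $i+1$ picks a uniform half-edge among the unmatched half-edges outside $L(u, T_i(u))$. Exactly $S_q(v) = d + (d-2)q - 2X_q(v)$ of these targets are incident to $B_w(v, T_q(v))$ (namely the half-edges in $L(v, T_q(v))$), while the total number of available targets is at most $dn$. Iterating over $q$ steps yields
$$\PP\bigl( B_w(u, T_q(u)) \cap B_w(v, T_q(v)) = \emptyset \,\big|\, \cF_{T_q(v)} \bigr) \le \left(1 - \frac{S_q(v)}{dn}\right)^q \le \exp\!\left(-\frac{q\, S_q(v)}{dn}\right).$$
On the event $\{X_q(v) \le A \log n\}$ for a suitable constant $A = A(d)$, one has $S_q(v) \ge (d-2) q (1 - o(1))$; inserting $q = \lfloor 2\sqrt{dn \log n}\rfloor$ turns the right-hand side into $n^{-4(d-2) + o(1)}$. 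Since $d \ge 3$, this is $o(n^{-2})$, and a union bound over the $\binom{n}{2}$ pairs of vertices closes the argument.

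The main obstacle is establishing the uniform tree-excess bound $X_q(v) \le A \log n$ for every $v$ simultaneously. Lemma~\ref{lem-treeexcess} controls $\tx$ only for graph-distance balls of radius $\tfrac{1}{5}\log_{d-1} n$, far smaller than our weighted balls of size $q = \Theta(\sqrt{n \log n})$. The plan here is to dominate $X_q(v)$ stochastically by the total number of back-edges produced during the first $q$ primary and secondary matchings from $v$: each such matching falls back into the current list $L$ with probability at most $|L|/(dn - 2|B|) = O(q/n)$, so $X_q(v)$ is dominated by a binomial random variable with mean $O(q^2/n) = O(\log n)$. A standard Chernoff tail bound then gives $\PP(X_q(v) > A \log n) = o(n^{-3})$, which is comfortably enough to absorb both the union bound over $v$ and the union bound over the $\binom{n}{2}$ pairs, thereby justifying the good event used throughout the argument.
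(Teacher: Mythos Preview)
Your proposal is correct and follows essentially the same route as the paper: freeze one ball of size $q+1$, then show via a birthday-type estimate that the other exploration hits it in $q$ steps with failure probability $\exp(-\Theta(q^2/n))=n^{-\Theta(1)}$, and union-bound over all pairs. The only cosmetic difference is that the paper invokes the event $R$ (already established earlier in Section~\ref{sec:upper}, via the same binomial domination of the tree excess) to get $S_q\ge (d-2-o(1))q$, whereas you re-derive the needed bound $X_q(v)\le A\log n$ directly; the two arguments are interchangeable.
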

\noindent For the sake of readability, we postpone the proof of the lemma to the end of this section.

Fix two vertices $u$ and $v$. Let
$$C(u,v):= \min\{k \geq 0: B_w(u, T_k(u)) \cap B_w(v, T_k(v)) \neq \emptyset\},$$
be the first time that $B_w(u, T_.(u))$ and $B_w(v, T_.(v))$ share a vertex. Thus, by the above lemma w.h.p.\ $C(u,v) < q$ for all $u$ and $v$. Let us denote by $Q$ the following event:
$$Q := \{C(u,v)<q \mbox{ for all } u \mbox{ and } v\}.$$

Consider now the exploration process started at a vertex $u$. We will need to find lower bounds for $S_k(u)$ in the range $1 \leq k \leq q$. We let $r:= \lfloor(\log n)^3\rfloor$.

By the uniform choice of the matching, for every $k \geq 0$, the number of half-edges introduced by the new vertex at time $T_{k+1}(u)$ and connecting back to $B_w(u, T_k(u))$ (given $\cF_{T_k(u)}$) is stochastically dominated by a binomial variable
$$\Bin(d-1, \alpha), \text{ where } \alpha = \frac{d+(d-2)(k+1)}{dn-2k} \leq \frac{k+2}{n},$$
where the above inequality is valid for $k \leq \frac{n}{2}-5$.
Therefore, the tree excess of $B_w(u, T_k(u))$ is stochastically dominated by a binomial variable $\Bin(dk, \frac{k+2}{n})$.

\noindent We have (for large $n$)
\begin{eqnarray}
\PP(X_r(u) \geq 2)  \leq \PP\left(\Bin \left(dr,\frac{r+2}{n}\right) \ge 2\right) \leq O(\frac{r^4}{n^2}) = o(n^{-3/2}).
\end{eqnarray}

\noindent Moreover, for any $k$ satisfying $r \leq k \leq 2q$, we have by Chernoff's inequality
\begin{eqnarray}
\PP(\{X_k(u) \geq k/\sqrt{r}\}) \leq  \PP\left(\Bin \left(dk,\frac{k+2}{n}\right) \geq k/\sqrt{r}\right) \leq \exp\left(-\frac{1}{3}k/\sqrt{r}\right) < n^{-5},
\end{eqnarray}
for any sufficiently large $n$, since $k^2/n = o(k/\sqrt{r})$.

\noindent We conclude by a union bound over all $r \leq k \leq 2q$,
$$\PP(\{X_k(u) < k/\sqrt{r}, \ \mbox{for all} \ \ r \leq k \leq 2q\}) \geq 1 - o(n^{-4}).$$

\noindent Define the event
\begin{equation}
R_u := \{X_r(u) \leq 1, \ \mbox{and} \ \ X_k(u) < k/\sqrt{r}, \ \mbox{for all} \ \ r < k \leq 2q\},
\end{equation}
such that $\PP(R_u) \geq 1 - o(n^{-3/2})$ by above inequalities.

\noindent Thus defining $R := \bigcap_{u \in [n]}R_u $, we get by union bound
$$\PP(R)\geq 1- o(n^{-1/2}).$$

\noindent Consider now two uniformly chosen vertices $u$ and $v$.
We have
$$\left(|\pi(u,v)| \mid Q \right)\leq_{st} \left(H_q(u) + H_q(v) \mid Q\right).$$

\noindent Furthermore, we have
$$\left(H_q(u) \mid R, Q \right)\leq_{st} \HH := \sum_{i=1}^{q} I_i,$$
where $\{I_i\}_{i=1}^{\infty}$ are independent Bernoulli random variables with parameter
$$\PP(I_i = 1) = \frac{d-1}{1+(d-2)i},$$
for all $1 \leq i \leq r$, and
$$\PP(I_i = 1) = \frac{d-1}{1+(d-2)i - 2i/\sqrt{r}},$$
for all $r < i \leq q$.

\noindent We conclude
\begin{eqnarray}\label{eq-up-stochdom}
\left(|\pi(u,v)| \mid R, Q \right) \leq_{st} \HH_1 + \HH_2 ,
\end{eqnarray}
where $\HH_1$ and $\HH_2$ are two independent copies of $\HH$ defined above.

\noindent We have the following lemma.
\begin{lemma}\label{lemup-main}
We have for some constant $C$ (depending only on $d$)
$$\PP( \HH_1 + \HH_2> \alpha^{*} (\log n + \log\log n) ) \leq C (n \log n) ^{-1}, $$
and,
$$\PP( \HH_1 + \HH_2> \widehat{\alpha}  (\log n + \log\log n) ) \leq C (n \log n) ^{-2}.$$
\end{lemma}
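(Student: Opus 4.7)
The plan is to apply a standard Chernoff/Cram\'er tail bound directly to the sum of independent Bernoullis $\HH_1+\HH_2$. Using the elementary inequality $\EE[e^{\lambda I_i}] = 1 + p_i(e^\lambda - 1) \leq \exp(p_i(e^\lambda - 1))$ for each Bernoulli $I_i$ with parameter $p_i$, independence gives $\EE[e^{\lambda(\HH_1 + \HH_2)}] \leq \exp(2(e^\lambda - 1)S)$ where $S := \sum_{i=1}^q p_i$, and hence by Markov's inequality,
\[
\PP(\HH_1 + \HH_2 > t) \leq \exp\bigl(2(e^\lambda - 1)S - \lambda t\bigr) \quad \text{for all } \lambda > 0.
\]

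The first main step is to estimate $S$ carefully. Splitting according to the definition of the $p_i$, the range $1 \leq i \leq r$ contributes $\sum_{i=1}^r \frac{d-1}{1+(d-2)i} = \frac{d-1}{d-2}\log r + O(1)$ by the harmonic sum estimate. For $r < i \leq q$, I would write
\[
p_i = \frac{d-1}{(d-2)i}\bigl(1 + O(1/\sqrt{r}) + O(1/i)\bigr).
\]
The crucial observation is that with $r = \lfloor(\log n)^3\rfloor$, the multiplicative error $O(1/\sqrt{r})$ contributes only $O(\log n /\sqrt{r}) = o(1)$ to the partial sum, so this range contributes $\frac{d-1}{d-2}(\log q - \log r) + O(1)$. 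Combining with $\log q = \tfrac12(\log n + \log\log n) + O(1)$ from $q = \lfloor 2\sqrt{dn\log n}\rfloor$, I obtain
\[
2S = \tfrac{d-1}{d-2}(\log n + \log\log n) + O(1).
\]

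Next I would optimize the Chernoff bound. Setting $\gamma := \tfrac{d-1}{d-2}$ and $L := \log n + \log\log n$, so that $2S = \gamma L + O(1)$, the choice $e^\lambda = t/(2S)$ minimizes the right-hand side and yields $\PP(\HH_1 + \HH_2 > t) \leq \exp\bigl(t - 2S - t\log(t/(2S))\bigr)$. Substituting $t = \alpha L$ and expanding $\log(\alpha L/(\gamma L + O(1))) = \log(\alpha/\gamma) + O(1/L)$, a direct simplification gives
\[
\text{exponent} = -L\Bigl[\alpha \log\Bigl(\tfrac{(d-2)\alpha}{d-1}\Bigr) - \alpha + \tfrac{d-1}{d-2}\Bigr] + O(1) = -L\bigl(f(\alpha)+1\bigr) + O(1),
\]
where the last equality uses $\tfrac{d-1}{d-2} - \tfrac{1}{d-2} = 1$ to rewrite the bracket as $f(\alpha) + 1$. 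Setting $\alpha = \alpha^*$ with $f(\alpha^*) = 0$ then gives the bound $O(e^{-L}) = O((n\log n)^{-1})$, and $\alpha = \widehat{\alpha}$ with $f(\widehat{\alpha}) = 1$ gives $O(e^{-2L}) = O((n\log n)^{-2})$, exactly as claimed.

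The key subtle point is not the Chernoff argument itself (which is completely routine) but the bookkeeping around the $\log\log n$ corrections. The threshold is written as $\alpha(\log n + \log\log n)$ precisely so that the $\log\log n$ factor in $S$ (arising from $q \asymp \sqrt{n\log n}$) combines cleanly with the $\log n$ to produce the extra $(\log n)^{-1}$ and $(\log n)^{-2}$ factors in the tail bounds. The choice $r = (\log n)^3$ is what ensures that the $2i/\sqrt{r}$ perturbation in $p_i$ contributes only an $O(1)$ error to $2S$, which gets absorbed into the constant $C$ rather than inflating to an $n^{o(1)}$ correction that would destroy the tightness of the conclusion.
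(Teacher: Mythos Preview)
Your proposal is correct and follows essentially the same approach as the paper: both apply the Chernoff/Markov bound $\PP(\HH_1+\HH_2>t)\le\exp\bigl(2(e^\lambda-1)S-\lambda t\bigr)$ after estimating $2S=\tfrac{d-1}{d-2}(\log n+\log\log n)+O(1)$, and both use the same choice $e^\lambda=\tfrac{d-2}{d-1}\alpha$ (you arrive at it by explicit optimization, the paper simply states it), yielding the exponent $-(f(\alpha)+1)(\log n+\log\log n)+O(1)$. Your bookkeeping of the $O(1/\sqrt{r})$ perturbation and the $\log\log n$ term matches the paper's treatment.
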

\noindent We postpone the proof of this lemma to the end of this section.

\medskip

\noindent We conclude by (\ref{eq-up-stochdom}), Lemma~\ref{lemup-main} and  union bound that
$$\PP\left(\max_{j \in [n]} |\pi(1,j)| > \alpha^{*} (\log n + \log\log n) \right) \leq \PP( R ^c) + \PP( Q^c ) + C/\log n,$$
and,
$$\PP\left(\max_{i,j \in [n]} |\pi(i,j)| > \widehat{\alpha}  (\log n + \log\log n) \right) \leq \PP ( R ^c) + \PP( Q ^c) + C/\log^2 n.$$

\noindent Since $R$ and $Q$ hold with high probability, we get (w.h.p.)
\begin{eqnarray*}
\max_{j \in [n]} |\pi(1,j)| \leq \alpha^{*} (\log n + \log\log n),
\end{eqnarray*}
and,
\begin{eqnarray*}
\max_{i,j \in [n]} |\pi(i,j)| \leq \widehat{\alpha}  (\log n + \log\log n).
\end{eqnarray*}

\noindent This completes the proof of the upper bound for Theorem~\ref{thm-main}. 

\noindent We end this section by presenting the proof of Lemma~\ref{lemup-intersect} and Lemma~\ref{lemup-main}.

\begin{proof}[Proof of Lemma~\ref{lemup-intersect}]
Fix two vertices $u$ and $v$. First consider the exploration process for $B_w(u, t)$ until reaching $t = T_q(u)$. We know that w.h.p.\ the event $R$ holds. Conditioned on $R$ we have $$S_q(u) \geq (d-1-o(1)) q.$$

Next, consider the exploration process started at $v$. Each matching adds a uniform half-edge to the neighborhood of $v$. Therefore, the probability that $B_w(v, T_q(v))$ does not intersect
$B_w(u, T_q(u))$ is at most
$$\left(1 - \frac{(d-1-o(1)) q}{dn}\right)^{q} \leq \exp\left( \-4(d-1-o(1)) \log n \right) < n^{-7},$$
for any large $n$. A union bound over $u$ and $v$ completes the proof.

\end{proof}

\begin{proof}[Proof of Lemma~\ref{lemup-main}]
We have for $\lambda>0$,

\begin{eqnarray*}
\EE e^{\lambda \HH_1} = \prod_{i=1}^{r} \left(1 + (e^\lambda-1)\frac{d-1}{1+(d-2)i}\right) \prod_{i=r+1}^{q} \left(1 + (e^\lambda-1)\frac{d-1}{1+(d-2)i - 2i/\sqrt{r} }\right).
\end{eqnarray*}

\noindent Then using the fact that $\log(1+x) \leq x$, we obtain

\begin{eqnarray*}
\frac{1}{2}\log \EE e^{\lambda (\HH_1 + \HH_2)} &=& \sum_{i=1}^{r} \log \left(1 + (e^\lambda-1)\frac{d-1}{1+(d-2)i}\right) \\ && + \sum_{i=r+1}^{q} \log\left(1 + (e^\lambda-1)\frac{d-1}{1+(d-2)i - 2i/\sqrt{r} }\right)\\
&\leq& (e^{\lambda}-1) \left( \sum_{i=1}^{r} \frac{d-1}{1+(d-2)i} + \sum_{i=r+1}^{q} \frac{d-1}{1+(d-2)i - 2i/\sqrt{r} }\right)\\
&\leq& (e^{\lambda}-1)\left( \frac{d-1}{d-2}\sum_{i=1}^{r} \frac{1}{i} + \frac{d-1}{d-2} \frac{1}{1-2r^{-1/2}}\sum_{i=r+1}^{q} \frac{1}{i}\right)\\
&\leq& (e^{\lambda}-1)\frac{d-1}{d-2} \left( 1 + O(r^{-1/2})\right)(\log q + 2)\\
&\leq& (e^{\lambda}-1)\frac{d-1}{d-2} (\log q + 3) .
\end{eqnarray*}

\noindent Recall that $q = \lfloor2 \sqrt{dn \log n}\rfloor$.
Choosing  $\lambda:= \log \left(\frac{d-2}{d-1}\alpha^{*}\right)$, we get

\begin{eqnarray*}
\log \EE e^{\lambda (\HH_1 + \HH_2)} &\leq& \left( \alpha^{*} - \frac{d-1}{d-2} \right) (\log n + \log \log n + \log d + 10).
\end{eqnarray*}

\noindent By Markov's inequality we have
\begin{eqnarray*}
\PP( \HH_1 + \HH_2 > \alpha^{*} (\log n + \log\log n) ) &\leq& \EE e^{\lambda (\HH_1 + \HH_2)} \exp(-\lambda \alpha^{*} (\log n + \log\log n) ) \\
&\leq& \exp \left( ( \alpha^{*} - \frac{d-1}{d-2} ) (\log n + \log \log n + \log d + 10)\right) \\ && \ \ \ \ \  \exp \left( - \alpha^{*} \log (\frac{d-2}{d-1}\alpha^{*}) (\log n + \log \log n)\right) \\ &=& C \exp \left( ( \alpha^{*} - \frac{d-1}{d-2} - \alpha^{*} \log \left(\frac{d-2}{d-1}\alpha^{*}\right))\right) \\ && \ \ \ \ \ \exp\left(\log n + \log \log n \right) \\
&=& C \exp \left( -\log n - \log \log n\right) \\ &=& C (n \log n)^{-1}.
\end{eqnarray*}

\noindent Similarly, by taking $\lambda:= \log \left(\frac{d-2}{d-1}\widehat{\alpha} \right)$ we get

\begin{eqnarray*}
\log \EE e^{\lambda (\HH_1 + \HH_2)} &\leq& \left( \widehat{\alpha}  - \frac{d-1}{d-2} \right) (\log n + \log \log n + \log d + 10),
\end{eqnarray*}
and by Markov's inequality we have
\begin{eqnarray*}
\PP( \HH_1 + \HH_2 > \widehat{\alpha}  (\log n + \log\log n) ) &\leq& \EE e^{\lambda (\HH_1 + \HH_2)} \exp(-\lambda \widehat{\alpha}  (\log n + \log\log n) ) \\
&\leq& \exp \left( ( \widehat{\alpha}  - \frac{d-1}{d-2} ) (\log n + \log \log n + \log d + 10)\right) \\ && \ \ \ \ \  \exp \left( - \widehat{\alpha}  \log (\frac{d-2}{d-1}\widehat{\alpha} ) (\log n + \log \log n)\right) \\ &=& C \exp \left( ( \widehat{\alpha}  - \frac{d-1}{d-2} - \widehat{\alpha}  \log \left(\frac{d-2}{d-1}\widehat{\alpha} \right)\right) \\ && \ \ \ \ \  \exp\left(\log n + \log \log n) \right) \\
&=& C \exp \left( -2 (\log n + \log \log n)\right) \\ &=& C (n \log n)^{-2},
\end{eqnarray*}
as required.

\end{proof}

\section{Proof of the lower bound}\label{sec:lower}
In this section we present the proof of the lower bound for Theorem~\ref{thm-main}.

For $\epsilon >0$ (small enough) we define the function $f_{\epsilon}: \RR^+ \rightarrow \RR$ as follows
\begin{eqnarray}
f_{\epsilon}(\alpha) &:=&  \alpha \log\left(\frac{d-2}{(d-1)(1-\epsilon)}\alpha\right) - \alpha(1-\epsilon)  + \frac{1}{d-2}  \\
&=& f(\alpha) + \alpha (\epsilon - \log (1-\epsilon)).
\end{eqnarray}
Let $\alpha^*_{\epsilon}$ and $\widehat{\alpha} _{\epsilon}$ be respectively the unique solutions to $f_{\epsilon}(\alpha)=0$ and $f_{\epsilon}(\alpha)=1$ for $\alpha > \frac{d-1}{d-2}$.
 Note that $\alpha^*_{\epsilon} < \alpha^*, \widehat{\alpha} _{\epsilon} < \widehat{\alpha} $, and furthermore, $\alpha^*_{\epsilon} \to \alpha^*$ and $\widehat{\alpha} _{\epsilon} \to \widehat{\alpha} $ as $\epsilon \to 0$.

 To prove the lower bound, it suffices to show that for all $\epsilon > 0$, there
exist w.h.p.\ a vertex $a$ such that
\begin{align*}
 |\pi(1,a)| \geq \alpha_{\epsilon}^{*} \log n,
\end{align*}
and there exists w.h.p.\ two vertices $u$ and $v$ such that
\begin{align*}
 |\pi(u,v)| \geq \widehat{\alpha}_{\epsilon} \log n.
\end{align*}

\noindent For a path $\gamma_l = v_0 , e_1 , v_1 , \dots , e_l , v_l$ where $v_{i-1}$ and $v_i$ are endpoints of $e_i$ for all $i \in [l]$, let
$$w(\gamma_l) = \sum_{i=1}^l w(e_i).$$

\noindent We first show that given that a path $P(u,v)$ between $u$ and $v$ has small weight, it is very likely to be the
shortest-weight path between its endpoints. More precisely, we have
the following.

\begin{lemma}\label{lem-lower}
For all $n$ sufficiently large, and any path $\gamma_k =v_0 , e_1 , v_1 , \dots , e_k , v_k$ with $k=O(\log n)$, we have (for all $\epsilon >0$)
$$\PP(\gamma_k \neq \pi(v_0,v_k) \mid w(\gamma_k)\leq \frac{1-\epsilon}{d-2}\log n ) =o(1).$$
\end{lemma}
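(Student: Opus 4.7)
\emph{Reduction to a detour.} My plan is to reduce the event $\{\gamma_k\neq\pi(v_0,v_k)\}$ to the existence of a light ``detour'' between two vertices of $\gamma_k$ that avoids $E(\gamma_k)$, and then to bound the probability of such a detour using the ball-growth machinery from Section~\ref{sec:upper}. Suppose $\gamma_k\neq\pi(v_0,v_k)$ and let $\gamma':=\pi(v_0,v_k)$. I would decompose $\gamma'$ into maximal arcs whose edges are either all in $E(\gamma_k)$ (``inside'' arcs) or all outside $E(\gamma_k)$ (``outside'' arcs); the endpoints of each outside arc lie on $\gamma_k$, as they are either incident to an inside edge or coincide with $v_0$ or $v_k$. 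Since $\gamma'\neq\gamma_k$, at least one outside arc $\delta$ exists, say from $v_i$ to $v_j$ with $i\neq j$. Because inside arcs use disjoint subsets of $E(\gamma_k)$, the total weight of outside arcs is at most $w(\gamma')<w(\gamma_k)\leq\frac{1-\epsilon}{d-2}\log n$, whence
\begin{equation*}
\dist_w^{G\setminus E(\gamma_k)}(v_i,v_j)\;\leq\;w(\delta)\;<\;\tfrac{1-\epsilon}{d-2}\log n.
\end{equation*}
Given that $\gamma_k$ is a path of $G$, the weights on $E(\gamma_k)$ are independent of the structure and the weights of $G\setminus E(\gamma_k)$, so the displayed event is independent of $\{w(\gamma_k)\leq\frac{1-\epsilon}{d-2}\log n\}$; it therefore suffices to bound its unconditional probability.

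\emph{Ball growth and union bound.} Fix $i$ and run the continuous-time exploration of Section~\ref{sse:explor} from $v_i$ in $G\setminus E(\gamma_k)$. Since $v_i$ has at least $d-2$ unmatched half-edges and $|E(\gamma_k)|=O(\log n)$, the tree-excess bounds from Section~\ref{sec:upper} give $S_\ell\geq(d-2)\ell/2$ with high probability for all $\ell\leq n^{1-\epsilon/2}$, and a Chernoff estimate on $T_\ell=\sum_j\mathrm{Exp}(S_j)$ then yields, for $t_n:=\frac{1-\epsilon}{d-2}\log n$,
\begin{equation*}
\EE\,\bigl|B_w^{G\setminus E(\gamma_k)}(v_i,t_n)\bigr|\;\leq\;C_d\,e^{(d-2)t_n}\;=\;C_d\,n^{1-\epsilon}.
\end{equation*}
By vertex-transitivity of the law of $G\setminus E(\gamma_k)$ restricted to the generic set $V\setminus V(\gamma_k)$, a specific generic vertex belongs to $B_w^{G\setminus E(\gamma_k)}(v_i,t_n)$ with probability at most $2C_d n^{-\epsilon}$; a short monotonicity comparison (the unmatched-degree deficit of each $v_l$ on $\gamma_k$ is at most $2$) upgrades this to the same bound $O(n^{-\epsilon})$ for any $v_j\in V(\gamma_k)\setminus\{v_i\}$. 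Taking a union bound over the $O(k^2)=O(\log^2 n)$ pairs $(i,j)$ gives total probability $O(\log^2 n\cdot n^{-\epsilon})=o(1)$.

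\emph{Main obstacle.} The delicate step is passing from the expected ball size to a per-vertex inclusion probability for the distinguished endpoints $v_i,v_j$, whose roles in $\gamma_k$ break full vertex-transitivity. Since each $v_l$ on $\gamma_k$ has degree $d-2$ or $d-1$ in $G\setminus E(\gamma_k)$ (only a bounded deficit from $d$) and $|V(\gamma_k)|=o(n)$, the monotonicity argument loses only an $O(1)$ multiplicative factor, easily absorbed into the $n^{-\epsilon}$ decay. Every remaining probabilistic estimate---tree-excess control and Chernoff concentration of $T_\ell$---is already in the excerpt, so no new probabilistic tools are needed.
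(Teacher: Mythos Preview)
Your detour reduction is correct and is in fact more careful than the paper's own proof, which grows balls only from $v_0$ and $v_k$ and does not explicitly address the possibility that $\pi(v_0,v_k)$ reuses some edges of $\gamma_k$. After that reduction, however, your route and the paper's diverge. The paper grows balls from \emph{both} endpoints for time $t_0:=\frac{1-\epsilon}{2(d-2)}\log n$, shows via an explicit integral that each ball has at most $z:=\lfloor\sqrt{n/\log n}\rfloor$ vertices with probability $1-o(n^{-4})$, and then bounds the collision probability by $O(z^2/n)=o(1)$ through a direct half-edge count (each of the $O(z)$ matchings in the second exploration hits the first ball with probability $O(z/n)$). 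This two-sided scheme sidesteps any per-vertex symmetry argument.

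Your one-sided scheme can be made to work, but as written it has two gaps. First, the inequality $S_\ell\geq(d-2)\ell/2$ coming from the tree-excess bounds points the wrong way: a lower bound on $S_\ell$ makes $T_\ell$ \emph{smaller} and the ball \emph{larger}. What you actually need for $\EE|B_w(v_i,t_n)|\leq C_d\,n^{1-\epsilon}$ is the trivial upper bound $S_\ell\leq d+(d-2)\ell$, which gives $T_\ell\geq_{st}\sum_j\mathrm{Exp}(d+(d-2)j)$ and then the Yule-process comparison. Second, the ``monotonicity comparison'' is not a proof: the half-edges on $v_j$ are not exchangeable with those on generic vertices because the vertex degrees differ, and lowering the degree of $v_j$ alters the whole exploration dynamics, not merely the hitting probability of $v_j$. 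A clean repair is to drop the symmetry step entirely: use Markov on the expected size to get $|B_w(v_i,t_n)|\leq L:=n^{1-\epsilon/2}$ except with probability $O(n^{-\epsilon/2})$, and then note that each of the first $O(L)$ matchings hits a fixed half-edge of $v_j$ with probability at most $1/(dn-O(L))$, so $\PP(v_j\in B_w(v_i,t_n))=O(L/n)+O(n^{-\epsilon/2})=O(n^{-\epsilon/2})$. This is essentially the paper's collision estimate carried out one-sidedly; the two-sided version is tidier and avoids the issue altogether.
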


\noindent For the sake of readability, we postpone the proof of the lemma to the end of this section.  Consider a path $\gamma_l = v_0 , e_1 , v_1 , \dots , e_l , v_l$.
It is easily seen that for $t > 0$, letting $\Po(t)$ denote a Poisson mean $t$ random variable, we have
\begin{eqnarray}
 \PP(w(\gamma_\ell) \leq t) = \PP(\Po(t) \geq \ell) \geq \exp(-t)\frac{t^\ell}{\ell!} = \exp\left(-t + \ell \log t - \log \ell! \right).
\end{eqnarray}

\noindent In the following, we let $\ell=\ell_{\epsilon}$ be large enough such that (by Stirling formula)
$$\log \ell! \leq \ell \log \ell - \ell (1-\epsilon).$$

\noindent Thus we have for $\alpha > 0$,

\begin{eqnarray}\label{eq:feps}
 \nonumber \PP\left(w(\gamma_{\ell}) \leq \frac{\ell (1-\epsilon)}{(d-2)\alpha} \right) &\geq& \exp \left(- \frac{\ell(1-\epsilon)}{(d-2)\alpha} + \ell \log\left( \frac{\ell (1-\epsilon)}{(d-2)\alpha} \right) -  \ell \log \ell + \ell (1-\epsilon)\right)\\
\nonumber &=& \exp \left(- \frac{\ell}{\alpha} \left(  \frac{1-\epsilon}{d-2} + \alpha \log\left( \frac{(d-2)\alpha}{1-\epsilon}\right) -  \alpha (1-\epsilon)\right)\right) \\ &=& \exp \left(- \frac{\ell}{\alpha} \left( -\frac{\epsilon}{d-2} + f_{\epsilon}(\alpha) + \alpha \log(d-1)  \right)\right).
 \end{eqnarray}

\noindent We get for $\alpha= \alpha_{\epsilon}^{*} $ in (\ref{eq:feps}) (since $f_{\epsilon}(\alpha_{\epsilon}^{*}) =0$)
\begin{eqnarray}\label{ineq-proba-one}
\PP\left(w(\gamma_{\ell}) \leq \frac{\ell(1-\epsilon)}{(d-2)\alpha_{\epsilon}^{*}} \right) \geq 
(d-1)^{-\ell} \exp\left(\frac{\epsilon \ell}{(d-2)\alpha_{\epsilon}^{*}}\right) ,
\end{eqnarray}
and  for $\alpha= \widehat{\alpha}_{\epsilon}$ in (\ref{eq:feps}) (since $f_{\epsilon}(\widehat{\alpha}_{\epsilon}) =1$)
\begin{eqnarray}\label{ineq-proba-two}
\PP\left(w(\gamma_{\ell}) \leq \frac{\ell(1-\epsilon)}{(d-2)\widehat{\alpha}_{\epsilon}} \right) \geq 
(d-1)^{-\ell} \exp\left(\left(-1+\frac{\epsilon}{d-2}\right) \frac{\ell}{\widehat{\alpha}_{\epsilon}}\right) .
\end{eqnarray}

\begin{lemma}
Assume $t_n=\lfloor c\log n\rfloor$ for some positive constant $c$. For any function $\omega(n)$ tending to $\infty$ with $n$,  w.h.p., there exists $v \in \partial B(1, t_n )$ such that
$$w(\gamma _1(v)) \leq \frac{t_n(1-\epsilon)}{(d-2)\alpha_{\epsilon}^{*}} + \omega(n),$$ where $\gamma_1 (v)$ denote the path from $1$ to $v$ in $B(1,t_n)$.
\end{lemma}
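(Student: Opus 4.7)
I would prove the lemma by a (truncated) second-moment argument on
\[
N := \#\bigl\{\,v\in\partial B(1,t_n) : w(\gamma_1(v))\leq W_n\,\bigr\}, \qquad W_n := \frac{t_n(1-\epsilon)}{(d-2)\alpha_\epsilon^*} + \omega(n).
\]
Taking $c$ small enough for Lemma~\ref{lem-treeexcess} to apply at scale $t_n$, w.h.p.\ $B(1,t_n)$ has tree excess at most $1$, so (modulo one extra edge) it is a tree with $(1-o(1))d(d-1)^{t_n-1}$ leaves, each joined to $1$ by a unique length-$t_n$ simple path whose edge weights are i.i.d.\ $\mathrm{Exp}(1)$.

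\textbf{First moment.} Applying inequality~(\ref{ineq-proba-one}) to each of these $(1-o(1))d(d-1)^{t_n-1}$ paths,
\[
\EE[N] \;\geq\; (1-o(1))\,\frac{d}{d-1}\,\exp\!\Bigl(\frac{\epsilon\, t_n}{(d-2)\alpha_\epsilon^*}\Bigr) \;=\; n^{\Omega(1)},
\]
so $\EE[N]$ grows polynomially in $n$, comfortably to infinity.

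\textbf{Second moment via truncation.} For distinct leaves $v_1,v_2$ whose tree-paths share a common prefix of length $L=L(v_1,v_2)$, decompose each path weight as $S+T_i$, with $S\sim\mathrm{Gamma}(L)$ the shared part and $T_1,T_2\sim\mathrm{Gamma}(t_n-L)$ independent tails. Writing $F_k$ for the $\mathrm{Gamma}(k)$ CDF, the crude bound $F_{t_n-L}(W_n-S)^2\leq F_{t_n-L}(W_n)\,F_{t_n-L}(W_n-S)$ gives
\[
\PP\bigl(A_{v_1}\cap A_{v_2}\bigr) \;\leq\; F_{t_n-L}(W_n)\cdot\PP(A_v),\qquad A_v:=\{w(\gamma_1(v))\leq W_n\},
\]
and summing over the at most $C(d-1)^{2t_n-L}$ ordered pairs at each depth produces a bound on $\EE[N^2]$. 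Because the relation $f_\epsilon(\alpha_\epsilon^*)=0$ places $W_n$ just slightly above the critical BRW level, this naive sum is \emph{not} $O(\EE[N]^2)$: pairs with intermediate common-prefix length dominate. I would fix this by a standard BRW truncation: with $\rho_\epsilon:=(1-\epsilon)/[(d-2)\alpha_\epsilon^*]$, define $\tilde N$ to count only leaves $v$ whose partial weights satisfy $w(\gamma_1(v)|_k)\geq k\rho_\epsilon-K$ for every $1\leq k\leq t_n$ and a large constant $K$. Cramér estimates show this truncation removes only an $o(1)$ fraction of $\EE[N]$, while the enforced lower bound $S\geq L\rho_\epsilon-K$ on the shared prefix shrinks $F_{t_n-L}(W_n-S)$ enough that the contribution of each depth $L$ is maximized at $L=O(1)$, yielding $\EE[\tilde N^2]=(1+o(1))\EE[\tilde N]^2$. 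Chebyshev then gives $\tilde N\to\infty$ in probability, and in particular $N\geq\tilde N\geq 1$ w.h.p.

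\textbf{Main obstacle.} The crux is the second-moment step at this near-critical BRW threshold. At the exact critical level (the $\epsilon=0$ equation $f(\alpha^*)=0$) the naive second moment blows up because paths with long shared prefixes contribute disproportionately. The gap $\epsilon$ between $f_\epsilon$ and $f$, together with the additive slack $\omega(n)$ in $W_n$, are precisely the tuning parameters that make the truncated count concentrated: $\epsilon$ pushes us from critical to supercritical, and $\omega(n)$ absorbs lower-order Stirling corrections. The bulk of the work is verifying the two statements (i) first-moment loss from truncation is $o(1)$ and (ii) truncated second moment matches $(1+o(1))\EE[\tilde N]^2$; these are the standard BRW slab-confinement estimates but must be tracked carefully at this calibration.
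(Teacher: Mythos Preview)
Your approach via a second moment on the leaf count is natural, but the central claim that the slab truncation yields $\EE[\tilde N^2]=(1+o(1))\EE[\tilde N]^2$ is not correct in this regime, and this is where the argument breaks down. Because $\rho_\epsilon:=(1-\epsilon)/[(d-2)\alpha_\epsilon^*]$ lies strictly \emph{above} the critical BRW speed (equivalently $(d-1)e^{-I(\rho_\epsilon)}>1$, which is exactly what makes $\EE[N]=n^{\Omega(1)}$), the normalized count $N/\EE[N]$ converges to the nondegenerate limit $W$ of the associated additive martingale on the deterministic $(d-1)$-ary tree. Hence $\mathrm{Var}(N)\asymp\EE[N]^2$, and your one-sided barrier does not cure this: the fluctuations of $W$ are generated by the first $O(1)$ generations of the tree (e.g.\ the random edge weights out of the root), not by rare early excursions below $k\rho_\epsilon-K$. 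Concretely, pairs sharing a prefix of length $L=1$ already contribute a fixed positive fraction of $\EE[N]^2$ with strict positive correlation, and the barrier at $k=1$ is essentially vacuous for large $K$. So Paley--Zygmund on $\tilde N$ delivers only $\PP(\tilde N>0)\geq c$ for some fixed $c\in(0,1)$, not $1-o(1)$.

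The paper closes this gap by a different and simpler mechanism, with no truncation and no second moment. It defines a block-level Galton--Watson process of ``good'' vertices (a vertex is good if it sits $\ell$ levels below a good vertex with block weight at most $\ell\rho_\epsilon$); this process is supercritical by~(\ref{ineq-proba-one}) and therefore survives with some fixed probability $q_\epsilon>0$. The boost from positive probability to w.h.p.\ then comes from an \emph{amplification} step: set $t_0=\log_{d-1}\log\omega(n)$, show by Markov plus a union bound that every path to depth $t_0$ has weight at most $\epsilon\,\omega(n)$, and launch independent copies of the Galton--Watson process from each of the $(d-1)^{t_0}=\log\omega(n)$ vertices at depth $t_0$; the probability that all of them die is $(1-q_\epsilon)^{\log\omega(n)}\to 0$. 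This is where the slack $\omega(n)$ is actually spent---to manufacture many independent trials---not merely to absorb Stirling errors as you suggest. Finally, the paper extends to arbitrary $c$ by iterating the small-$c$ case along a chain of segments of depth $\lfloor (c/K)\log n\rfloor$; your sketch only treats $c<1/(5\log(d-1))$ and does not mention either the amplification or the iteration.
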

\begin{proof}
We first prove the lemma for the case $c < \frac{1}{5 \log (d-1)}$.

Consider now $B(1, \lfloor c\log n\rfloor)$ for $c < \frac{1}{5 \log (d-1)} $. By  Lemma~\ref{lem-treeexcess} w.h.p.\ $\tx(B(1, \lfloor c\log n\rfloor) \leq 1$, and then by removing at most one of the children of $1$ (and its descendants) we have the tree structure and then, $|\partial B (1, \lfloor c\log n\rfloor)| \geq (d-1)^{\lfloor c\log n\rfloor}$. In the following we assume that one of the children of node 1 is removed (even if $\tx(B(1, \lfloor c\log n\rfloor) = 0$) such that $|\partial B (1, \lfloor c\log n\rfloor)| = (d-1)^{\lfloor c\log n\rfloor}$.

\noindent Let $t_0 = \log_{d-1}\log \omega(n)$.  Note that for any path $\gamma_{t_0}$ of length $t_0$, by Markov inequality
$$\PP(w(\gamma_{t_0}) \geq \epsilon \omega(n))) \leq \frac{t_0}{\epsilon \omega(n)}.$$
Thus, by union bound, the probability that this would be true for one of the nodes at level $t_0$ of node 1 (i.e., in $\partial B(1, t_0)$) is smaller than
$$(d-1)^{t_0} \frac{t_o}{\epsilon \omega(n)} = \frac{\log \omega(n) \log_{d-1} \log \omega(n)}{\epsilon \omega(n)}, $$which goes to zero as $n$ goes to $\infty$.
Then w.h.p.\ the path from the root ($1$) to all nodes at level $t_0$ has weight smaller that $\epsilon \omega(n)$.

\noindent We assume $\ell = \ell_{\epsilon}$ is large enough such that $\exp\left(\frac{\epsilon \ell}{(d-2)\alpha_{\epsilon}^{*}}\right) >1$. Now consider the following branching process starting from a node $r$ at level $t_0$, i.e., $r \in \partial B(1, t_0)$.

We call a vertex $v$ {\it good}  if either $v$ is the root ($v=r$), or if $v$ lies $\ell$ levels below a good vertex $u$ and $w(\gamma _1(u,v)) \leq \frac{\ell(1-\epsilon)}{(d-2)\alpha_{\epsilon}^{*}}$, where $\gamma_1 (u,v)$ denote the path from $u$ to $v$ (in $B(1,\lfloor c\log n\rfloor)$).

\medskip

\noindent The collection of good nodes form a Galton-Watson tree. Let $Z$ denote the progeny distribution of this process. Without need to calculate its distribution, from (\ref{ineq-proba-one}) we know that
$$\EE Z = (d-1)^{\ell} \PP\left(w(\gamma_{\ell}) \leq \frac{\ell(1-\epsilon)}{(d-2)\alpha_{\epsilon}^{*}} \right) \geq 
\exp\left(\frac{\epsilon \ell}{(d-2)\alpha_{\epsilon}^{*}}\right) >1.
$$
Hence, with some positive probability $q_{\epsilon}$ this process survives. We conclude with probability at least $q_{\epsilon}$ we have a good node at level $\lfloor c\log n\rfloor$ from the root $r$ at level $t_0$. Considering the same process for all nodes at level $t_0$, we conclude that there exists a good vertex at level $\lfloor c\log n\rfloor$, with probability at least (by independence of these processes)
$$1 - (1-q_{\epsilon})^{(d-1)^{t_0}} = 1 - (1-q_{\epsilon})^{\log \omega(n)} \to 1, $$ as $n \to \infty$. Then w.h.p.\ we have a node $v$ at level $t_n$, such that
$$w(\gamma _1(v)) \leq \frac{t_n(1-\epsilon)}{(d-2)\alpha_{\epsilon}^{*}} + \epsilon \omega(n).$$

\noindent This completes the proof of lemma for the case $c < \lfloor \frac{1}{5 \log (d-1)} \rfloor$.

Now consider the case  $c \geq \frac{1}{5 \log (d-1)}$, and let $K$ be an integer such that $c' := c/K < \frac{1}{5 \log (d-1)}$. By previous argument, we know that w.h.p.\ there exists a node $v_1$ at level $\lfloor c' \log n \rfloor$ such that  $w(\gamma _1(v_1)) \leq \frac{\lfloor c' \log n \rfloor (1-\epsilon)}{(d-2)\alpha_{\epsilon}^{*}} + \epsilon \omega(n)$. We know repeat the same argument to find a node $v_2$ at level $\lfloor c' \log n \rfloor$ below of node $v_1$ such that $w(\gamma _1(v_1,v_2)) \leq \frac{\lfloor c' \log n \rfloor(1-\epsilon)}{(d-2)\alpha_{\epsilon}^{*}} + \epsilon \omega(n)$, where $\gamma_1 (u,v)$ denote the path from $u$ to $v$ on $B(1, t)$. Note that the tree excess is again at most one, and the number of nodes at level $t_0$ of node $v_1$ is at least $(d-2)(d-1)^{t_0-1}$ which goes to infinity as $n \to \infty$, and we have the similar arguments.  Now repeating this process $K-1$ times completes the proof.
\end{proof}

\noindent Thus, by above lemma, there exists w.h.p.\ a node $a$ at level $\alpha^*_{\epsilon} \log n$ such that $w(\gamma_1(a)) \leq \frac{1-\epsilon}{d-2} \log n$. By Lemma~\ref{lem-lower}, this path is optimal. We conclude w.h.p.\ there exists a node $a$ such that $\pi(1,a) \geq \alpha^*_{\epsilon} \log n$.

\medskip

We now prove that there exists w.h.p.\ two vertices $u$ and $v$ such that
\begin{align*}
 |\pi(u,v)| \geq \widehat{\alpha}_{\epsilon} \log n.
\end{align*}

\noindent Indeed, we prove that there exists w.h.p.\ a path $\gamma$ of length $\widehat{\alpha}_{\epsilon} \log n$ such that $w(\gamma) \leq \frac{1-\epsilon}{d-2} \log n$. Then again using Lemma ~\ref{lem-lower}, we conclude the proof.

Consider the following exploration process starting from a node $a$. We call a vertex $v$, $a$-good  if either $v$ is the root ($v=a$), or if $v$ lies $\ell$ levels below a good vertex $u$ and $w(\gamma _a(u,v)) \leq \frac{\ell (1-\epsilon)}{(d-2)\widehat{\alpha}_{\epsilon}}$, where $\gamma_a (u,v)$ denote the path from $u$ to $v$ in $B(a, .)$.

\noindent To find the nodes which are $a$-good, we first explore the nodes in $B(a, \ell)$, and we find the set of nodes at this level which are $a$-good. Then, for each of these ($a$-good) nodes, we explore again $\ell$ level behind and we continue the exploration until finding all of the $a$-good nodes. Let us denote by $G_{\ell}(a)$ the explored graph (starting from $a$) to find the set of all $a$-good nodes.

\noindent The following lemma bounds from above the size of $G_{\ell}(a)$.

\begin{lemma} \label{lem-boundLog}
Let $G\sim G(n,d)$ for some fixed $d \geq 3$. Then there exists a constant $A$ such that w.h.p., $|G_{\ell}(u)| \leq A \log n$ for all $u \in V(G)$.
\end{lemma}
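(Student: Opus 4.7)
My plan is to track the $u$-good vertices produced by the exploration via a Galton-Watson branching process that is \emph{subcritical}, deduce that its total progeny is $O(\log n)$ with very high probability, and conclude by observing that each good vertex contributes at most $d(d-1)^{\ell-1}$ nodes to $G_\ell(u)$. A union bound over $u \in V(G)$ will then yield the uniform claim.

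The first key step is to verify subcriticality. In the tree-like approximation of the CM exploration from a good vertex $v$, there are $(d-1)^\ell$ potential descendants at level $\ell$, each reached by a unique simple $\ell$-path whose edges carry i.i.d.\ $\mathrm{Exp}(1)$ weights. Hence the number of good children of $v$ is $\Bin((d-1)^\ell, p_\epsilon)$ with
\[
 p_\epsilon \;:=\; \PP\!\left(\sum_{i=1}^\ell E_i \leq \frac{\ell(1-\epsilon)}{(d-2)\widehat{\alpha}_\epsilon}\right), \qquad E_i \stackrel{\mathrm{iid}}{\sim} \mathrm{Exp}(1).
\]
Running the Stirling estimate that produced the lower bound (\ref{ineq-proba-two}) in the \emph{reverse} direction (now as an upper bound on $p_\epsilon$) and invoking the defining identity $f_\epsilon(\widehat{\alpha}_\epsilon)=1$ yields, after a short calculation,
\[
 (d-1)^\ell p_\epsilon \;\leq\; \exp\!\left(\ell\Big[\epsilon\Big(1+\tfrac{1}{(d-2)\widehat{\alpha}_\epsilon}\Big) - \tfrac{1}{\widehat{\alpha}_\epsilon}\Big]\right).
\]
At $\epsilon = 0$ the bracket equals $-1/\widehat{\alpha}<0$, so it remains strictly negative for $\epsilon$ small; with the previously chosen $\ell = \ell_\epsilon$ taken large enough, the mean offspring $\lambda_\epsilon := (d-1)^\ell p_\epsilon$ is strictly less than~$1$.

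The second step is the standard random-walk / generating-function tail bound for the total progeny $T(u)$ of a subcritical Galton-Watson tree, namely $\PP(T(u)>k) \leq e^{-I(\lambda_\epsilon) k}$ with rate $I(\lambda_\epsilon) = \lambda_\epsilon - 1 - \log \lambda_\epsilon > 0$. Taking $k = C\log n$ with $C$ large gives $\PP(T(u)>C\log n) \leq n^{-2}$; a union bound over $u$ then shows that $T(u) \leq C\log n$ simultaneously for all $u$ with probability $1-o(1)$. Since $|G_\ell(u)| \leq d(d-1)^{\ell-1} \cdot T(u)$, the lemma follows with $A := C d(d-1)^{\ell-1}$.

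The main obstacle is justifying the tree-like approximation to the actual CM exploration. As long as the exposed region has size $O(\log n)$, the exploration-process description in Section~\ref{sse:explor} makes every new pairing close a back-edge with probability $O(\log n / n)$; moreover, back-edges can only \emph{reduce} the number of distinct vertices reached at level $\ell$, so the stochastic domination $Z \leq_{st} \Bin((d-1)^\ell, p_\epsilon)$ holds unconditionally, preserving the mean computation above. To avoid the mild circularity in ``small because tree-like, tree-like because small'', one runs a truncated exploration that halts the moment $C d(d-1)^{\ell-1}\log n$ vertices have been exposed, applies the subcritical tail bound to this truncated process, and checks a posteriori that the halting event has probability $o(n^{-1})$, at which point the original and truncated explorations coincide with the required probability and the union bound over $u$ closes the argument.
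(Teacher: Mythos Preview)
Your proposal is correct and follows the same strategy as the paper: identify the $u$-good vertices as a subcritical branching process, bound the total number of good vertices by an exponential tail, multiply by the $\ell$-ball size $d(d-1)^{\ell-1}$, and finish with a union bound over $u\in V(G)$. If anything, your write-up is more careful than the paper's---you derive the needed \emph{upper} bound on $p_\epsilon$ explicitly (the paper's proof cites the lower-bound inequality for this step) and you justify the total-progeny tail via the random-walk representation together with the stochastic domination $Z\le_{st}\Bin((d-1)^\ell,p_\epsilon)$, whereas the paper simply asserts $\PP(|G_\ell(a)|\ge K d(d-1)^{\ell-1})\le\beta_\epsilon^{K}$ from $\PP(Z_\ell(a)\ge 1)\le\beta_\epsilon$ without further argument.
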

 
\noindent The proof of this lemma is given at the end of this section.
 Hence, we can assume $G_{\ell}(u) \leq A \log n$ for all $u \in V(G)$ in the rest of the proof.

We now call a vertex $u$ {\it nice} if $G_{\ell}(u)$ is a tree and the height of $G_{\ell}(u)$, denoted by $D_{\ell}(u)$, is at least $\widehat{\alpha}_{\epsilon} \log n$, i.e., $D_{\ell}(u) \geq \widehat{\alpha}_{\epsilon} \log n$.

\noindent Note that when $u$ is nice,  then there exists a node $v$ at level $\widehat{\alpha}_{\epsilon} \log n$ behind $u$ such that  $w(\gamma_u(v)) \leq  \frac{1-\epsilon}{d-2} \log n$, where $\gamma_u (v)$ denote the path from $u$ to $v$ in $B(u, .)$. Using the second moment method, we now prove that there exists at least one nice vertex.

Let $N_a$ denote the event that node $a$ is nice, and $X = \sum_{a\in [n]} \ind( N_a)$ be the total number of nice vertices. We now show that $X \geq 1$ w.h.p., which concludes the proof.

\noindent Let $Z$ be the distribution of the number of $a$-good nodes at level $\ell$ in $(d-1)$-array tree having $a$ as a root.  Conditioning on the tree structure of $G_{\ell}(a)$ and by removing one of the children of $a$ (and all its descendants), the set of $a$-good nodes are distributed as a branching process with distribution $Z$. Note that by (\ref{ineq-proba-two}), we have
 \begin{eqnarray}
 \EE Z = (d-1)^{\ell} \PP\left(w(\gamma_{\ell}) \leq \frac{\ell(1-\epsilon)}{(d-2)\widehat{\alpha}_{\epsilon}} \right) \geq 
 \exp\left(\left(-1+\frac{\epsilon}{d-2}\right) \frac{\ell}{\widehat{\alpha}_{\epsilon}}\right) .
 \end{eqnarray}

\noindent Let $P_k(a)$ be the probability that this branching process survives for at least $k$ generations. By basic recurrent argument, we have
 $$P_{k+1}(a) = 1 - \Phi_Z(1-P_k(a)),$$
where $\Phi_Z(s) = \EE s^Z$ denote the generation function of $Z$.

\noindent Note that $\EE Z <1$ (for $\epsilon$ small enough) and the branching process is subcritical. Hence, $P_k(a) \to 0$ as $k \to \infty$. Using $1 - \Phi_Z(1-x) =  \Phi_Z'(1) x + O(x^2)$, and $ \Phi_Z'(1) = \EE Z$, it follows easily that
\begin{eqnarray}
P_k (a) = (\EE Z + o(1))^k, \ \mbox{as} \ k \to \infty.
\end{eqnarray}

\noindent Thus, conditioning on the tree-structure of $G_{\ell} (a)$ (and by choosing $k= \frac{\widehat{\alpha} _{\epsilon}}{\ell} \log n$), we get
$$\PP (D_{\ell}(a) \geq \widehat{\alpha} _{\epsilon} \log n) \geq (1 \pm o(1)) n^{-1+\frac{\epsilon}{d-2}}.$$

\noindent Since the size of $G_{\ell}(a)$ is (w.h.p.)\ smaller that $A\log n$ (by Lemma~\ref{lem-boundLog}),  with probability at least $1- O(\log n / n)$, $G_{\ell}(a)$ is a tree.

\noindent Putting all these together, we have
$$\EE X = \sum_a \PP(N_a) \geq  \frac{2}{3} n^{\frac{\epsilon}{d-2}}.$$

\noindent And,
\begin{eqnarray*}
\EE X^2 &=& \EE (\sum_a \ind(N_a))^2  =  \EE \sum_{a,b} \ind(N_a) \ind(N_b) \\
&=& \EE \left[\sum_{a}\ind(N_a) \sum_{b:\ G_{\ell}(a)\cap G_{\ell}(b)\neq \emptyset}  \ind(N_b) + \sum_{a,b:\ G_{\ell}(a)\cap G_{\ell}(b) = \emptyset} \ind(N_a) \ind(N_b)\right]\\
&\leq& (A\log n)^2 \EE X +  (\EE X)^2,
\end{eqnarray*}
\noindent where the last inequality follows by Lemma~\ref{lem-boundLog}.
We conclude that
$$Var[X] = \EE X^2 - (\EE X)^2 \leq (A \log n)^2 \EE X .$$

\noindent Then, by Chebysev's inequality w.h.p.\ $X \geq \frac{1}{2} n^{\frac{\epsilon}{d-2}}$.

\noindent This completes the proof of the lower bound. 

\noindent We end this section by presenting the proof of Lemma~\ref{lem-lower} and  Lemma~\ref{lem-boundLog}.

\begin{proof}[Proof of Lemma~\ref{lem-lower}]
We condition on the path $\gamma_k$ between  $v_0$ and $v_k$. We now remove the path $\gamma_k$ and consider the exploration process defined is Section~\ref{sse:explor} starting from $v_0$. (The proof is similar to \cite[Lemma 3.5]{DKLP09}.)
\\
Let $\tau_i$ denote the time of the $i$'th exploration step (for $i \geq 0$, $\tau_0 = 0$).  Note that $\tau_{i+1} - \tau_i \geq_{st} Y_i$, where $Y_i$ are independent exponential random variables with $$\EE [Y_i] =  \left(1 + (d-2)(i+1)\right)^{-1}.$$
Note that this is true since the worst case is when $X_i(a) = 0$, i.e., the explored set forms a tree.

We let $z=\lfloor \sqrt{n/\log n}\rfloor$. We will show later that the growing balls in the exploration process starting from $v_0$ and $v_k$ will not intersect w.h.p.\ provided that they are of size less than $z$. We now prove that $\tau_z > \frac{1-\epsilon}{2(d-2)} \log n$ with high probability.

We have
\begin{eqnarray*}
\PP(\tau_z \leq t) &\leq& \int_{\sum_{i=1}^z x_i \leq t} \prod_{i=1}^z [1+(d-2)i]e^{-\sum_{i=1}^z (1+(d-2)i)x_i} dx_1 \dots dx_z \\
&=& \int_{0 \leq y_1 \leq \dots \leq y_z \leq t} \prod_{i=1}^z [1+(d-2)i] e^{-y_z} e^{-(d-2)\sum_{i=1}^z y_i} dy_1 \dots dy_z,
\end{eqnarray*}
where $y_k = \sum_{i=0}^{k-1}x_{z-i}$. Letting $y=y_z$ and accounting for all permutations over $y_1, \dots, y_{z-1}$ (by giving to these variables the range $[0,y]$), we obtain
\begin{eqnarray*}
\PP(\tau_z \leq t) &\leq& \int_0^t e^{-(d-1)y} \frac{\prod_{i=1}^z (i+\frac{1}{d-2})}{(z-1)!} \\
&&  \ \ \ . \left(\int_{[0,y]^{z-1}} (d-2)^z e^{-(d-2)\sum_{i=1}^{z-1}y_i} dy_1 \dots dy_{z-1}\right) dy \\
&\leq& \int_0^t e^{-(d-1)y} \frac{\prod_{i=1}^z (i+\frac{1}{d-2})}{(z-1)!} . \left( \prod_{i=1}^{z-1} \int_o^y (d-2) e^{-(d-2)y_i} dy_i\right) dy\\
&\leq& C (d-2) z^{\frac{d-1}{d-2}} \int_0^t  e^{-(d-1)y} \left(1-e^{-(d-2)y}\right)^{z-1}dy,
\end{eqnarray*}
where $C>0$ is an absolute constant. Now using the fact that $\left(1-e^{-(d-2)y}\right)^{z-1} \leq e^{-n^{\alpha}}$, for some $\alpha >0$ and for all $0 \leq y \leq \frac{1-\epsilon}{2(d-2)} \log n =: t_0$, we obtain
$$\PP \left( \tau_z  \leq  \frac{1-\epsilon}{2(d-2)} \log n\right) \leq C (d-2) z^{\frac{d-1}{d-2}} \int_0^{t_0} e^{-n^{\alpha}} dy = o(n^{-4}).$$

Similarly considering the exploration process for $v_k$, again after time $t_0$, we obtain w.h.p.\ a set of size at most $z$.
Now remark that, because each matching is uniform among the remaining half-edges, the probability of hitting the ball of size $t_0$ around $v_0$ is at most $z/n$. Altogether,
$$\PP(\gamma_k \neq \pi(v_0,v_k) \mid w(\gamma_k)\leq \frac{1-\epsilon}{d-2}\log n ) \leq \frac{z^2}{n} + o(1) = o(1),$$
as desired.
\end{proof}

\begin{proof}[Proof of Lemma~\ref{lem-boundLog}]
Let $Z_{\ell}(a)$ denote the number of $a$-good nodes in $B_{\ell}(a)$ (i.e., the nodes in generation $\ell$ behind $a$ with (weighted) distance smaller than $\frac{\ell(1-\epsilon)}{(d-2)\widehat{\alpha} _{\epsilon}}$ from $a$). By Markov inequality and from (\ref{ineq-proba-two}), we obtain
\begin{eqnarray*}
\PP(Z_{\ell}(a) \geq 1) &\leq& \EE Z_{\ell}(a) \\
&\leq& d(d-1)^{\ell-1} (d-1)^{-\ell} \exp\left(\left(-1+\frac{\epsilon}{d-2}\right) \frac{\ell}{\widehat{\alpha}_{\epsilon}}\right) \\
&=& \frac{d}{d-1} \exp\left(\left(-1+\frac{\epsilon}{d-2}\right) \frac{\ell}{\widehat{\alpha}_{\epsilon}}\right) =: \beta_{\epsilon}
\end{eqnarray*}
(this follows from the fact that the worst case is when $B_{\ell}(a)$ forms a tree).

\noindent Thus, for $\ell=\ell_{\epsilon}$ large enough, we have $\PP(Z_{\ell}(a) \geq 1) \leq \beta_{\epsilon} < 1$. 

\noindent We conclude (for any integer $K$)
\begin{eqnarray*}
\PP\left(G_{\ell}(a)\leq K d(d-1)^{\ell-1}\right) \leq \beta_{\epsilon}^K.
\end{eqnarray*}

\noindent Now by choosing $K= 2\log n/|\log \beta_{\epsilon}|$, we get
\begin{eqnarray*}
\PP\left(G_{\ell}(a)\leq 2 d(d-1)^{\ell-1}\log n/|\log \beta_{\epsilon}|\right) \leq n^{-2}.
\end{eqnarray*}

\noindent Taking a union bound over all $a$ finishes the proof.

\end{proof}

\section*{Acknowledgements}
Part of this work was done when the authors were visiting MSRI, Berkeley. We thank them for their hospitality. We also thank Shankar Bhamidi for helpful comments.
Hamed Amini gratefully acknowledges financial support from the Austrian Science Fund (FWF) though project P21709.

\bibliographystyle{abbrv}

\bibliography{Biblio}
\end{document}